\newtheorem{theorem}{Theorem}[section]
\newtheorem*{theorem*}{Theorem}
\newtheorem{corollary}{Corollary}[section]
\newtheorem*{corollary*}{Corollary}
\newtheorem{lemma}{Lemma}[section]
\newtheorem{proposition}{Proposition}[section]
\theoremstyle{definition}
\newtheorem{remark}{Remark}[section]
\newcommand{\R}{\mathbb R}
\newcommand{\calC}{\mathcal C}
\newcommand{\calL}{\mathcal L}
\begin{document}

\title{Scalar and Mean Curvature Comparison on Compact Cylinder}
\author{Jie Xu}
\address{
Department of Mathematics, Northeastern University, Boston, MA, USA}
\email{jie.xu@northeastern.edu}

\begin{abstract}
Let $ X $ be a closed, oriented Riemannian manifold. Denote by $ (M = X \times I, \partial M = X \times \lbrace 0 \rbrace \cup X \times \lbrace 1 \rbrace, g) $ a compact cylinder with smooth boundary, $ \dim M \geqslant 3 $. In this article, we address the following question:  If $ g $ is a Riemannian metric having (i) positive scalar curvature (PSC metric) on $ M $ and nonnegative mean curvature on $ \partial M $; and (ii) the $ g $-angle between normal vector field $ \nu_{g} $ along $ \partial M $ and $ \partial_{\xi} \in \Gamma(TI) $ being less than $ \frac{\pi}{4} $, then there exists a metric $ \tilde{g} $ on $ M $ such that $ \tilde{g} |_{X \times \lbrace 0 \rbrace} $ is a PSC metric on $ X \cong X \times \lbrace 0 \rbrace $. Equivalently, we show that if $ X $ admits no PSC metric, but $ M $ admits a PSC metric $ g $ satisfying the angle condition, then the mean curvature on $ \partial M $ must be negative somewhere. This generalizes a result of Gromov and Lawson \cite{GL} for $ X = \mathbb{T}^{n} $.
\end{abstract}

\maketitle

\section{Introduction}
Let $ X $ be a closed, oriented manifold with $ n - 1: = \dim X \geqslant 2 $. Let $ I = [0, 1]_{\xi} $ be the standard unit interval with $ \xi $-variable. The relation between the geometry of $ X $ and the geometry of the compact cylinder $ X \times I $ is an active research area.

In Gromov's seminal work \cite{GROMOV}, \cite{GROMOV2}, the study of Riemannian bands on compact cylinders with lower scalar curvature bounds has been an active field for recent years. On compact cylinders $ X \times I $, the Riemannian band is defined to be the distance bewteen $ X \times \lbrace 0 \rbrace $ and $ X \times \lbrace 1 \rbrace $ with repsect to $ g $. Gromov conjectured an upper bound of Riemannian bands for a class of manifolds $ (X \times I, g) $, where $ X $ admits no Riemannian metric with positive scalar curvature (PSC), but $ X \times I $ admits a metric $ g $ whose associated scalar curvature is uniformly positive, i.e. there exists a positive constant $ \kappa_{0} > 0 $ such that the scalar curvature is bounded below by $ \kappa_{0} $. The model case is $ X = \mathbb{T}^{n}, n \geqslant 1 $, see \cite{GROMOV2}. Recently, this conjecture has been proved for a wide class of compact cylinders with some extra hypotheses, especially when $ X $ is a spin manifold, see e.g. \cite{CZ}, \cite{Rade}, \cite{Zeidler}. 

Gromov's conjecture on Riemannian bands gives a metric geometry information on compact cylinders. There is also a Riemannian geometry information in terms of mean curvature on compact cylinders. Gromov and Lawson \cite{GL} showed that if $ X = \mathbb{T}^{n}, n \geqslant 1 $, and $ X \times I = \mathbb{T}^{n} \times I $ admits a PSC metric $ g $, then the mean curvature $ h_{g} $ must be negative somewhere on $ X \times \lbrace 0 \rbrace \cup X \times \lbrace 1 \rbrace $. Very recently, R\"ade \cite{Rade} proved this for general compact cylinders up to $ \dim X \leqslant 7 $, except some cases when $ \dim X = 4 $ . We point out that in this dimensions, a finite upper bound of Riemannian band implies that the mean curvature is negative somewhere, but the opposite direction is not true in general.

In this article, we generalize the result of Gromov and Lawson \cite{GL} on $ \mathbb{T}^{n} $ to all compact, oriented manifolds $ X $ with $ \dim X \geqslant 2 $ by a codimension two approach developed in \cite{RX2}, provided that the $ g $-angle condition defined in (\ref{intro:eqn1}) below is satisfied. The results in \cite{CZ}, \cite{GROMOV2}, \cite{GL}, \cite{Rade},  \cite{Zeidler}, use methods from spin geometry, Dirac operators, minimal surfaces, and $ \mu $-bubbles. Our method is different: we introduce an auxiliary one-dimensional space to construct a conformal factor for the original metric $ g $ on $ X \times I $, which comes from the solution of a partial differential equation that is very closely related to the Gauss-Codazzi equation.

For notations throughout this article, let $ (M = X \times I, \partial M = X \times \lbrace 0 \rbrace \cup X \times \lbrace 1 \rbrace, g) $ be a compact cylinder with $ n : = \dim M \geqslant 3 $.  Let $ R_{g} $ be the scalar curvature with respect to $ g $ on $ M $, and let $ R_{\imath^{*}g} $ be the scalar curvature for the induced metric $ R_{\imath^{*}g} $ on the closed manifold $ X \cong X \times \lbrace 0 \rbrace $ with respect to the natural inclusion $ \imath :X \times \lbrace 0 \rbrace \rightarrow M $. Let $ \nu_{g} $ be the positively oriented unit normal vector field along $ \partial M $, and inwarding along $ X \times \lbrace 0 \rbrace $. Let $ h_{g} $ be the mean curvature with respect to $ g $ on $ \partial M $. We denote the conformal class $ [g] $ of $ g $ on $ M $ by
\begin{equation*}
    [g] = \lbrace e^{2\phi}g : \phi \in \calC^{\infty}(M) \rbrace.
\end{equation*}
We sometimes will label $ I $ by $ I_{\xi} $ for clarity.

On $ M = X \times I_{\xi} $, the $ g $-angle between $ \nu_{g} $ and the canonical vector field $ \partial_{\xi} \in \Gamma(TI) $ on $ \partial M $, which is defined to be
\begin{equation*}
    \angle_{g}\left(\nu_{g},\partial_{\xi} \right)  = \cos^{-1}\left(\frac{g(\nu_{g}, \partial_{\xi})}{\left(g(\nu_{g}, \nu_{g} \right))^{\frac{1}{2}}\left(g(\partial_{\xi} , \partial_{\xi}  \right))^{\frac{1}{2}}} \right) = \cos^{-1}\left(\frac{g(\nu_{g}, \partial_{\xi})}{g(\partial_{\xi} , \partial_{\xi} )^{\frac{1}{2}}} \right) \in [0, \frac{\pi}{2}].
\end{equation*}
It is easy to see that the angle quantity is a conformal invariance. 
With $ \tilde{g} = e^{2\phi} g \in [g] $, the unit normal vector field with $ \tilde{g} $ is $ \nu_{\tilde{g}} = e^{-\phi} \nu_{g} $, hence
\begin{equation*}
     \cos\left(\angle_{\tilde{g}}\left(\nu_{\tilde{g}},\partial_{\xi} \right) \right)  = \frac{\tilde{g}(\nu_{\tilde{g}}, \partial_{\xi})}{(\tilde{g}(\partial_{\xi}, \partial_{\xi}))^{\frac{1}{2}}} = \frac{e^{2\phi}g(e^{-\phi}\nu_{g}, \partial_{\xi})}{(e^{2\phi}g(\partial_{\xi}, \partial_{\xi}))^{\frac{1}{2}}} = \cos \left(\angle_{g}\left(\nu_{g},\partial_{\xi} \right) \right).
\end{equation*}
Therefore, such an angle condition is natural in conformal geometry setting. Our first main result gives the existence of PSC metrics on $ X \cong X \times \lbrace 0 \rbrace_{\xi} $ along with the $ g $-angle condition.
\begin{theorem*}
    Assume that the compact cylinder $ (M, \partial M), \dim M \geqslant 3 $ admits a metic $ g $ with positive scalar curvature $ R_{g} > 0 $ and nonngative mean curvature $ h_{g} \geqslant 0 $. If $ g $ satisfies the $ g $-angle condition
    \begin{equation}\label{intro:eqn1}
    \angle_{g}(\nu_{g}, \partial_{\xi}) < \frac{\pi}{4} \Leftrightarrow \cos\left(\angle_{g}\left(\nu_{g},\partial_{\xi} \right) \right) > \frac{\sqrt{2}}{2} \Leftrightarrow \frac{g(\partial_{\xi}, \partial_{\xi})}{g(\nu_{g}, \partial_{\xi})^{2}} < 2
    \end{equation}
everywhere on $ X \times \lbrace 0 \rbrace_{\xi} $, then there exists a metric $ \tilde{g} $ on $ (M, \partial M) $ such that $ \imath^{*} \tilde{g} $ has positive scalar curvature on $ X \times \lbrace 0 \rbrace_{\xi} \cong X $.
\end{theorem*}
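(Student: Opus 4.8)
The plan is to deform $g$ conformally to a metric $\tilde g=e^{2w}g$ whose trace on $X\times\{0\}$ is PSC, with the conformal factor $w$ produced as a positive solution of an elliptic boundary value problem on $M$ built out of the Gauss and Codazzi equations of the hypersurface $\Sigma:=X\times\{0\}$; the three hypotheses $R_g>0$, $h_g\ge 0$ and the angle condition (\ref{intro:eqn1}) will enter, respectively, as positivity of the bulk potential, the correct sign of the boundary (Robin) term, and positivity of a single coefficient in the final Gauss-equation estimate.

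The first step is to record the geometry of $g$ near $\Sigma$ against the product coordinate $\xi$. Since $\nu_g$ and $\nabla_g\xi$ are both normal to $\{\xi=0\}$ we have $\nu_g=\pm\nabla_g\xi/|\nabla_g\xi|_g$ on $\Sigma$, so $\cos^2\angle_g(\nu_g,\partial_\xi)=(g^{\xi\xi}g_{\xi\xi})^{-1}$ and (\ref{intro:eqn1}) becomes $g^{\xi\xi}g_{\xi\xi}<2$, equivalently the tangential part of $\partial_\xi$ along $\Sigma$ has squared $g$-length $<\tfrac12 g_{\xi\xi}$. This ``almost normal'' property of the globally defined field $\partial_\xi$ is what lets me carry out along the whole band $M=X\times I$ a construction that is a priori only normal-collar in nature: the $g$-distance to $\Sigma$ only foliates a collar, whereas $\partial_\xi$ foliates all of $M$.

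For the main step, following the codimension-two scheme of \cite{RX2}, I would adjoin an auxiliary one-dimensional factor and seek $w\in C^\infty(M)$, $w>0$, solving a boundary value problem whose leading part is a conformal-Laplacian-type operator $\calL_g=-c_n\Delta_g+R_g$, $c_n=\tfrac{4(n-1)}{n-2}$, and whose lower-order and boundary data are exactly the curvature quantities that appear when one writes the Gauss equation of $\Sigma$ in $(M,e^{2w}g)$ and eliminates the normal Ricci term by the radial Riccati (Codazzi) identity $\partial_{\nu}H+|A|^2+\mathrm{Ric}(\nu,\nu)=0$ — this is the announced ``PDE very closely related to Gauss--Codazzi''. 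The boundary condition is of Robin type, $\partial_{\nu_g}w+c\,h_g\,w=0$ with $c>0$; here $h_g\ge 0$ makes the associated quadratic form $\int_M\big(c_n|\nabla w|^2+R_g w^2\big)\,\dvol+c\int_{\partial M} h_g\,w^2$ bounded below by $\int_M R_g w^2>0$, so its first eigenvalue is positive and the problem is solvable with a positive $w$ (and the conformally invariant angle condition is undisturbed by the deformation).

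Finally I would restrict $\tilde g=e^{2w}g$ to $\Sigma$. Combining the conformal transformation laws of $R$, $\mathrm{Ric}(\nu,\nu)$, $H$ and $|A|^2$ with the Gauss equation, and then trading the normal Ricci term for $\partial_\xi H$ via the Riccati identity written along $\partial_\xi$, the positive bulk term $R_g$ survives in $R_{\imath^*\tilde g}$ with a coefficient that, after one Cauchy--Schwarz step projecting $\partial_\xi$ onto $\nu_g$, is proportional to $2-g(\partial_\xi,\partial_\xi)/g(\nu_g,\partial_\xi)^2$. By (\ref{intro:eqn1}) this is strictly positive, so $R_{\imath^*\tilde g}>0$ on $X\times\{0\}$; extending $\tilde g$ arbitrarily over $M$ then gives the claim. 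I expect this last positivity bookkeeping to be the crux: the whole scheme only closes because $2-g(\partial_\xi,\partial_\xi)/g(\nu_g,\partial_\xi)^2>0$, which is why the threshold $\frac{\pi}{4}$ in (\ref{intro:eqn1}) is the natural (and presumably sharp) one, and because the auxiliary-factor construction must be arranged so that the Robin term genuinely carries the sign forced by $h_g\ge 0$ while the deformation patches smoothly over the cylinder.
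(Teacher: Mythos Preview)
Your plan misidentifies both the PDE and the role of the angle condition, and as written it would not close.

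The operator that actually appears when you write the Gauss--Codazzi equation for a conformal metric $e^{2\phi}g$ is, up to constants, $\nabla_{\nu_g}\nabla_{\nu_g}-\Delta_g$ acting on $\phi$ (see (\ref{intro:eqn3})), \emph{not} the conformal Laplacian $-c_n\Delta_g+R_g$. This operator is never elliptic, so your Robin problem on $M$ with leading part $\calL_g$ gives you control of $R_{\tilde g}$ on $M$, not of $R_{\imath^*\tilde g}$ on $\Sigma$; the Riccati trick you invoke to trade $\mathrm{Ric}(\nu,\nu)$ for $\partial_\nu H$ does not convert one into the other. The paper's fix is exactly what the introduction says: adjoin $\mathbb{S}^1_t$, choose the conformal factor to be constant in $\xi$ so that $\nabla_{\nu_g}\nabla_{\nu_g}$ collapses to $\nabla_V\nabla_V$ with $V=\nu_g-a\partial_\xi$ tangential to $W=X\times\mathbb{S}^1_t$, and solve $4\nabla_V\nabla_V u-4\Delta_{\sigma^*\bar g}u+R_{\bar g}u=F$ on the \emph{closed} manifold $W$ --- no boundary condition enters at this stage. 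The angle condition (\ref{intro:eqn1}) is precisely what makes $\nabla_V\nabla_V-\Delta_{\sigma^*\bar g}$ elliptic on $W$ (Lemma \ref{Set:lemma1}); it is a solvability condition for the PDE, not a coefficient in a final Cauchy--Schwarz step.

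Positivity of $R_{\imath^*\tilde g}$ then comes not from ``$R_g$ surviving with coefficient $2-g(\partial_\xi,\partial_\xi)/g(\nu_g,\partial_\xi)^2$'' but from engineering the right-hand side $F$ to equal a large constant $C+1$ on $X\times\{0\}_t$ while keeping $\|F\|_{L^p}$ (hence $\|u\|_{C^{1,\alpha}}$) small; a separate scaling argument (Lemma \ref{Set:lemma3}) is needed to show that the stray $\partial_t^2 u$ term produced when comparing $\Delta_{\bar g}$ with $\Delta_{\sigma^*\bar g}$ is also small. The hypothesis $h_g\ge 0$ is used only at the outset, to conformally normalize to $h_g\equiv 0$, not as a Robin sign.
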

The trivial case is $ (M, \partial M, k ) $ where $ k = k_{0} \oplus d\xi^{2} $ is a product metric for a metric $ k_{0} $ on $ X $. Clearly $ h_{k} = 0 $. If $ R_{k}  > 0 $, it follows that $ R_{\imath^{*}k} > 0 $. In this situation, $ \nu_{k} = \partial_{\xi} $. Therefore the $ k $-angle $ \angle_{k}(\nu_{k}, \partial_{\xi}) = 0 < \frac{\pi}{4} $. Denote by hypersurfaces $ X_{\xi} : = X \times \lbrace \xi \rbrace, \xi \in [0, 1] $. We can define the second fundamental form, mean curvature, and unit normal vector field $ \nu_{g, \xi} $ along $ X_{\xi} $. Analogously, we can define the $ g $-angle on $ X_{\xi} $ by $ \angle_{g}(\nu_{g, \xi}, \partial_{\xi}) $. If we choose the sign of $ \nu_{g, \xi} $ such that $ \angle_{g}(\nu_{g, \xi}, \partial_{\xi}) \in [0, \frac{\pi}{2}] $, the same argument in Theorem \ref{POS:thm1} implies that the same conclusion in the Theorem holds on $ X_{\xi} \cong X $ provided that
\begin{equation}\label{intro:eqn1a}
\angle_{g}(\nu_{g, \xi}, \partial_{\xi}) < \frac{\pi}{4} \; {\rm on} \; X_{\xi},
\end{equation}
as we mentioned in Remark \ref{POS:re1}.

The contrapositive statement of the theorem generalizes the result of Gromov and Lawson \cite{GL} on compact cylinders $ X \times I $ from the torus case $ X = \mathbb{T}^{n} $ to all closed, oriented manifolds $ X $ with $ \dim X \geqslant 2 $ that having no PSC metrics, when the $ g $-angle condition (\ref{intro:eqn1}) is satisfied. 
\begin{corollary*}
    Let $ X $ be a closed, oriented manifold with $ \dim X \geqslant 2 $. If $ X $ admits no PSC metric, meanwhile $ X \times I $ admits a PSC metric $ g $, then either the $ g $-angle condition fails on all hypersurfaces $ X_{\xi}, \xi \in [0, 1] $, or the mean curvature $ h_{g} $ must be negative somewhere on $ X \times \lbrace 0 \rbrace \cup X \times \lbrace 1 \rbrace $.
\end{corollary*}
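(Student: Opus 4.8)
The plan is to obtain the corollary from the Theorem, together with its extension to an arbitrary slice $X_\xi$ stated in Remark~\ref{POS:re1}, by contraposition. Assume $X$ admits no PSC metric while $X \times I$ admits a PSC metric $g$, and suppose, for contradiction, that the asserted alternative fails. Then there is a slice $X_{\xi_0}$, for some $\xi_0 \in [0,1]$, on which the angle condition $\angle_g(\nu_{g,\xi_0},\partial_\xi) < \frac{\pi}{4}$ holds everywhere, while simultaneously $h_g \geqslant 0$ everywhere on $X \times \{0\} \cup X \times \{1\}$. I will show that, for each possible value of $\xi_0$, this produces a PSC metric on $X$, contradicting the hypothesis, so that the alternative must hold.

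For $\xi_0 = 0$ the data $(M,\partial M,g)$ satisfy precisely the hypotheses of the Theorem: $R_g > 0$ on $M$, $h_g \geqslant 0$ on $\partial M$, and the $g$-angle condition \eqref{intro:eqn1} on $X \times \{0\}_\xi$. Hence the Theorem furnishes a metric $\tilde g$ on $M$ with $\imath^*\tilde g$ of positive scalar curvature on $X \times \{0\}_\xi \cong X$, i.e.\ a PSC metric on $X$. For $\xi_0 = 1$ I would use the evident symmetry of the cylinder under $\xi \mapsto 1-\xi$: pulling $g$ back along the self-diffeomorphism $\sigma(x,\xi) = (x,1-\xi)$ of $X \times I$ interchanges the two ends while keeping $R_{\sigma^*g} > 0$ on $M$ and $h_{\sigma^*g} \geqslant 0$ on $\partial M$, and the angle quantity of $\sigma^*g$ on $X \times \{0\}_\xi$ agrees with that of $g$ on $X \times \{1\}_\xi$, hence is $< \frac{\pi}{4}$; applying the Theorem to $\sigma^*g$ then gives a PSC metric on $X \times \{0\}_\xi \cong X$.

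For an interior slice $\xi_0 \in (0,1)$ I would invoke Remark~\ref{POS:re1} directly. Fixing the sign of $\nu_{g,\xi_0}$ so that $\angle_g(\nu_{g,\xi_0},\partial_\xi) \in [0,\frac{\pi}{2}]$, the hypotheses $R_g > 0$ on $M$ and $h_g \geqslant 0$ on $\partial M$, together with the angle bound \eqref{intro:eqn1a} on $X_{\xi_0}$, allow the proof of the Theorem to run verbatim and yield $\tilde g$ on $M$ with $\imath_{\xi_0}^*\tilde g$ of positive scalar curvature on $X_{\xi_0} \cong X$. In every case $X$ carries a PSC metric, which is the desired contradiction, and the corollary follows.

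I do not expect a genuine difficulty here: the corollary is a formal consequence of the Theorem and its slice version. The only points that need attention are checking that $\sigma^*g$ really inherits all three hypotheses when the distinguished end is switched — this is because $\sigma$ is an isometry from $(X\times I,\sigma^*g)$ onto $(X\times I,g)$ carrying one boundary component to the other — and choosing the orientation of $\nu_{g,\xi_0}$ so that its angle with $\partial_\xi$ lies in $[0,\frac{\pi}{2}]$, the convention under which Remark~\ref{POS:re1} is stated. Both are innocuous, since the angle quantity is a conformal (and diffeomorphism) invariant, as observed in the introduction.
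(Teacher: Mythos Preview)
Your argument is correct and is essentially the paper's own approach: the corollary is the contrapositive of the Theorem together with its slice version in Remark~\ref{POS:re1}, which the paper simply states without further proof. Your separate treatment of $\xi_0 = 1$ via the reflection $\xi \mapsto 1-\xi$ is harmless but unnecessary, since Remark~\ref{POS:re1} already covers all $\xi \in [0,1]$, including the endpoints.
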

The corollary also generalizes the result of R\"ade \cite{Rade} by removing the dimension hypothesis.
\medskip

We now outline why we need an auxiliary dimension in the context of conformal geometry and partial differential equations. The standard way to consider the sign of $ R_{\imath^{*}g} $ on $ X \times \lbrace 0 \rbrace $ is the Gauss-Codazzi equation:
\begin{equation}\label{intro:eqn2}
R_{g} = R_{\imath^{*}g} + 2 Ric_{g}(\nu_{g}, \nu_{g}) - h_{g}^{2} + \lvert A_{g} \rvert^{2}.
\end{equation}
Here $ A_{g} $ is the second fundamental form with respect to $ g $. If we consider a conformal metric $ \tilde{g} = e^{2\phi}g $ on $ (M, \partial M ) $, applying Gauss-Codazzi equation (\ref{intro:eqn2}) and laws of conformal transformations, we have
\begin{equation}\label{intro:eqn3}
\begin{split}
    R_{\imath^{*} \tilde{g}} & = R_{\tilde{g}} - 2Ric_{\tilde{g}}(\nu_{\tilde{g}}, \nu_{\tilde{g}}) + h_{\tilde{g}}^{2} - \lvert A_{\tilde{g}} \rvert^{2} \\
    & = e^{-2\phi}\left(R_{g} - 2Ric_{g}(\nu_{g}, \nu_{g}) + 2(n - 2) \nabla_{\nu_{g}} \nabla_{\nu_{g}} \phi - 2(n - 2) \Delta_{g} \phi + h_{g}^{2} - \lvert A_{g} \rvert^{2} \right) \\
    & \qquad - e^{-2\phi}\left( 2(n - 2) \nabla_{\nu_{g}} \phi \nabla_{\nu_{g}} \phi + (n - 3)(n - 2) \lvert \nabla_{g} \phi \rvert^{2} \right).
\end{split}
\end{equation}
In order to get $ R_{\imath^{*}\tilde{g}} > 0 $, we need to get a solution $ \phi $ of some partial differential equation with second order differential operator $ 2(n - 2) \nabla_{\nu_{g}} \nabla_{\nu_{g}}  - 2(n - 2) \Delta_{g} $, where the inhomogeneous term of the PDE is positive and large enough on $ X \times \lbrace 0 \rbrace $; In addition, we need other $ \phi $-related terms in the conformal transformation of the Gauss-Codazzi equation (\ref{intro:eqn3}) to be small, i.e. we require the solution--which will be used as the conformal factor--to have $ \calC^{1, \alpha} $-smallness. The largeness of the inhomogeneous term on $ X \times \lbrace 0 \rbrace  $ thus dominates all other terms in (\ref{intro:eqn3}) and therefore we obtain a PSC metric $ \tilde{g} $, i.e. $ R_{\imath^{*} \tilde{g}} > 0 $. From this, a second conformal transformation will give a Riemannian metric $ g' $ with $ R_{g'} > 0 $ and $ R_{\imath^{*}g'} > 0 $ by an earlier work of the author \cite{XU7}. 

Unfortunately, the operator $ 2(n - 2) \nabla_{\nu_{g}} \nabla_{\nu_{g}}  - 2(n - 2) \Delta_{g} $ is never elliptic, which means that we may not be able to get a desired solution of such a PDE with desired inhomogeneous term. To resolve this issue, we introduce the auxiliary one-dimensional space, summarized by the following diagram. This codimension two approach is inspired by our recent work of $ \mathbb{S}^{1} $-stability conjecture \cite{RX2}, where the auxiliary space is $ [-1, 1]$. The essential difference is that the auxiliary one-dimensional space space we introduce here is $ \mathbb{S}^{1} $ with $ t $-variable and standard Riemannian metric $ dt^{2} $, since we require $ M \times \mathbb{S}^{1} $ to be a manifold with boundary, but not a manifold with corner. Again, we will interchangeably use $ \mathbb{S}^{1} = \mathbb{S}_{t}^{1} $ for clarity.
\begin{equation}\label{diagram}
\begin{tikzcd} (M, g) \arrow[r,"\tau_{1}"] & (M \times \mathbb{S}^{1}_{t},g \oplus dt^{2}) \\
(X \times \lbrace 0 \rbrace_{\xi}, \imath^*g) \arrow[r,"\tau_{2}"]\arrow[u,"\imath"] &  (W = X \times \mathbb{S}^{1}_{t}, \sigma^*(g \oplus dt^{2})) 
\arrow[u,"\sigma"]
\end{tikzcd}
\end{equation}
We always identify $ X \cong X \times \lbrace 0 \rbrace_{\xi} \subset M $ and $ W \cong W \times \lbrace 0 \rbrace_{\xi} \subset M \times \mathbb{S}^{1}_{t} $. Here $ \sigma : W \rightarrow M \times \mathbb{S}^{1}_{t} $ is given by $ \sigma(w) = (w, 0) $ with fixed point $ 0 \in I_{\xi} $. The two inclusions $ \tau_{1}, \tau_{2} $ are sending $ M \ni x \mapsto (x, P) = \tau_{1}(x), X \times \lbrace 0 \rbrace_{\xi} \ni y \mapsto (y, P) = \tau_{2}(y) $ for some fixed point $ P \in \mathbb{S}^{1} $. With natural projections $ \Pi_{1} : M \times \mathbb{S}^{1}_{t} \rightarrow M $ and $ \Pi_{2} : W \times \lbrace 0 \rbrace_{\xi} \rightarrow X \times \lbrace 0 \rbrace_{\xi} \times \lbrace P \rbrace $, we get $ \Pi_{i} \circ \tau_{i} = Id, i = 1, 2 $. Under local parametrization, we may identify $ P $ with point $ 0 $ in any local chart containing $ P $. We use the labels $ \lbrace 0 \rbrace_{\xi} \in I_{\xi} $ and $ \lbrace 0 \rbrace_{t} \cong \lbrace P \rbrace \in \mathbb{S}_{t}^{1} $ to distinguish these two points.

With the help of the auxiliary space, we construct a partial differential equation with a modified, elliptic differential operator on the closed manifold $ W $. The crucial point is that the PDE is elliptic when the g-angle condition holds. Such a PDE with desired inhomogenous term admits a $ \calC^{1, \alpha} $-small solution. We then lift a slight modification of this solution to the ambient space $ M \times \mathbb{S}^{1}_{t} $, serving as a conformal factor of the metric $ \bar{g} = g \oplus dt^{2} $ on $ M \times \mathbb{S}^{1}_{t} $. When we restrict this conformal transformation to $ M $, the conformal factor plays a role transfers some geometric data from $ W $ to $ M $. With some work, the conformal factor and the original geometric information together yield the desired geometric information on $ X \times \lbrace 0 \rbrace_{\xi} \cong X $.

As an outline of this article, we construct the conformal factor $ u' : M \times \mathbb{S}_{t}^{1} \rightarrow \R $ in \S2 via a series of technical results. In \S2, we also verify some properties the conformal factor $ u' $ must satisfy, especially its $ \calC^{0} $-estimates and partial $ \calC^{2} $-estimates. The $ g $-angle condition is used to obtain the ellipticity of the differential operator, which comes from the conformal transformation of the Gauss-Codazzi equation.

In \S3 we prove the main theorem Thm.~\ref{POS:thm1}, essentially by using Gauss-Codazzi on $ \tilde{g} = (u')^{\frac{4}{n - 2}} \bar{g} $ to first move from $M \times \mathbb{S}_{t}^{1} $ back to $ M $ and then back to $X \cong X \times \lbrace 0 \rbrace_{\xi} $. Corollary \ref{POS:cor2} generalizes the results of Gromov and Lawson \cite{GL} and R\"ade \cite{Rade}. With a direct application of a Yamabe-type problem with Dirichlet boundary condition \cite{XU7}, we show in Corollary \ref{POS:cor1} that there exists a metric $ g' $ in the conformal class $ [g] $ such that $ R_{g'} > 0 $ and $ R_{\imath^{*}g'} > 0 $ under the same hypotheses of Theorem \ref{POS:thm1}.
\medskip

The author would like to thank Boris Botvinnik, Robert McOwen, Steven Rosenberg and Junrong Yan for their great help in many discussions on this topic.

\section{Technical Results}
In this section, we give basic setup and all technical results that are essential for our main result. These technical results are analogous to the technical results of \cite[Section 2]{RX2}. Recall that $ X $ is a closed, oriented manifold, and $ M = X \times I_{\xi} $ is a $ n $-dimensional compact cylinder with Riemannian metric $ g $ such that $ R_{g} > 0 $ and $ h_{g} \geqslant 0 $. Without loss of generality, we assume, throughout this article, that $ R_{g} > 0 $ and $ h_{g} \equiv 0 $. This can be done by taking a conformal transforamtion of the original metric, see \cite{ESC}. Throughout this article, we denote $ \bar{g} = g \oplus dt^{2} $ be the Riemannian metric on $ M \times \mathbb{S}^{1}_{t} $. It follows that $ R_{\bar{g}} > 0 $ on $ M \times \mathbb{S}_{t}^{1} $ and $ h_{\bar{g}} \equiv 0 $ on $ \partial( M \times \mathbb{S}_{t}^{1}) = X \times \lbrace 0 \rbrace_{\xi} \times \mathbb{S}_{t}^{1} \cup X \times \lbrace 1 \rbrace_{\xi} \times \mathbb{S}_{t}^{1} $. We also assume the familiary of the standard knowledge of Sobolev space $ W^{k, p} $, and $ \calL^{p} $-elliptic regularity, see e.g. \cite{Niren4}. For any metric $ g_{0} $, we set $ -\Delta_{g_{0}} $ to be the positive definite Laplace-Beltrami operator.

Choosing local coordinates $ \lbrace x^{i}, x^{n} = \xi \rbrace $ of $ M $ and associated local frame $ \lbrace \partial_{i}, \partial_{n} = \partial_{\xi} \rbrace $ such that $ \lbrace \partial_{i} \rbrace_{i = 1}^{n - 1} $ are tangential to $ \partial M $, we can write
\begin{equation*}
    \nu_{g} = a\partial_{\xi} + \sum_{i = 1}^{n - 1} b^{i} \partial_{i}.
\end{equation*}
This defines a $ g $-related vector field $ V = \nu_{g} - a\partial_{\xi} $, which is locally expressed by $ V = \sum_{i = 1}^{n - 1} b^{i} \partial_{i} $. Note that
\begin{equation*}
1 = g(\nu_{g}, \nu_{g}) = g(a\partial_{\xi}, \nu_{g}) + \sum_{i} g(b^{i} \partial_{i}, \nu_{g})  \Rightarrow a = g(\partial_{\xi}, \nu_{g})^{-1}.
\end{equation*}
$ a $ is nowhere vanishing since both $ \nu_{g} $ and $ \partial_{\xi} $ are nowhere tangent to $ \partial M $.

Clearly $ g(V, \nu_{g}) = g(\nu_{g} - a\partial_{\xi}, \nu_{g}) = 0 $, thus the vector field $ V $ is tangential to $ \partial M $, and can be extended to $ (W \cong X \times \lbrace 0 \rbrace _{\xi} \times \mathbb{S}^{1}_{t}, \sigma^{*}g) $ via $ \tau_{2, *} $. 
\def\corl{1}
\if\corl0
{\color{red}{
\def\tv{{\tilde{v}}}
\def\tV{{\tilde{V}}}
Note the following proposition:
\begin{proposition}
    Let $v\in C^\infty(X)$, and let $\tv\in C^\infty(M)$, s.t. $\tv(x,\xi)=v(x)$. Then
$\nabla_{\nu_g}\tv|_{\partial M}=\Delta_{\iota^*g}v+\nabla_V\nabla_Vv+a[\frac{\partial}{\partial \xi},\tV]v$,
where 
\[\tV:=\nu_g-g(\partial_\xi,\nu_g)^{-1}\partial_\xi\]
near $\partial M$, $\nu_g$ is the outer-normal v.f. given by exponential map near $\partial M.$    
\end{proposition}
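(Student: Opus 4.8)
I read the asserted identity as the statement that, along $\partial M$, the second normal derivative $\nabla_{\nu_g}\nabla_{\nu_g}\tilde v$ equals $\nabla_V\nabla_V v + a\,[\partial_\xi,\tilde V]v$, and equivalently that $\Delta_g\tilde v=\Delta_{\iota^*g}v+\nabla_V\nabla_V v+a\,[\partial_\xi,\tilde V]v$ (under the standing normalization $h_g\equiv 0$ of this section). The plan is to unwind both expressions in a collar $U\cong\partial M\times[0,\epsilon)$ of $\partial M$ using the frame $\{\partial_i,\partial_\xi\}$ of the excerpt. Since $\nu_g$ is the exponential-map extension of the boundary unit normal, its integral curves are unit-speed geodesics, so $\nabla_{\nu_g}\nu_g\equiv 0$ and $|\nu_g|_g\equiv 1$ on $U$; in particular $\nabla_{\nu_g}\nabla_{\nu_g}\tilde v=\nu_g(\nu_g\tilde v)=\mathrm{Hess}_g\,\tilde v(\nu_g,\nu_g)$ with no ambiguity. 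Write $\nu_g=\alpha\,\partial_\xi+\sum_i\beta^i\partial_i$ on $U$, so that $\tilde V=\nu_g-a\,\partial_\xi=(\alpha-a)\partial_\xi+\sum_i\beta^i\partial_i$ with $a=g(\partial_\xi,\nu_g)^{-1}$; by the decomposition on $\partial M$ recalled above, $\alpha|_{\partial M}=a|_{\partial M}$ and $\beta^i|_{\partial M}=b^i$, hence $\tilde V|_{\partial M}=V=\sum_i b^i\partial_i$ is tangent to $\partial M$ (though $\tilde V$ generally picks up a $\partial_\xi$-component off $\partial M$).

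The core is a commutator computation using that $\tilde v$ is pulled back from $X$, hence $\partial_\xi\tilde v\equiv 0$ on $M$. This at once gives $\nu_g\tilde v=\tilde V\tilde v=\sum_i\beta^i\partial_i v$ on $U$, and then
\[
\nabla_{\nu_g}\nabla_{\nu_g}\tilde v=\big(a\,\partial_\xi+\tilde V\big)\big(\tilde V\tilde v\big)=a\,[\partial_\xi,\tilde V]\tilde v+\tilde V\big(\tilde V\tilde v\big),
\]
the term $a\,\tilde V(\partial_\xi\tilde v)$ being dropped by $\partial_\xi\tilde v\equiv 0$ once more. Restricting to $\partial M$: in $\tilde V(\tilde V\tilde v)$ the summand carrying the factor $\alpha-a$ vanishes on $\partial M$, while the remaining summand only involves the tangential coefficients $\beta^i|_{\partial M}=b^i$ and their tangential derivatives, so that $\tilde V(\tilde V\tilde v)|_{\partial M}=V(Vv)=\nabla_V\nabla_V v$; similarly $[\partial_\xi,\tilde V]\tilde v=\sum_i(\partial_\xi\beta^i)\partial_i v$, whose restriction is the object denoted $[\partial_\xi,\tilde V]v$. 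Hence $\nabla_{\nu_g}\nabla_{\nu_g}\tilde v|_{\partial M}=\nabla_V\nabla_V v+a\,[\partial_\xi,\tilde V]v$.

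To reach the $\Delta_g$-form and bring in $\Delta_{\iota^*g}v$, I would use the Gauss-type decomposition of the ambient Laplacian along $\partial M$: for a $g$-orthonormal frame $\{e_1,\ldots,e_{n-1},\nu_g\}$ adapted to $\partial M$, writing each $\mathrm{Hess}_g\,\tilde v(e_i,e_i)$ as its $\partial M$-intrinsic Hessian plus a second-fundamental-form correction and tracing gives
\[
\Delta_g\tilde v|_{\partial M}=\Delta_{\iota^*g}\big(\tilde v|_{\partial M}\big)\pm h_g\,(\nu_g\tilde v)|_{\partial M}+\mathrm{Hess}_g\,\tilde v(\nu_g,\nu_g)|_{\partial M}.
\]
Since $\tilde v|_{\partial M}=v$, $\mathrm{Hess}_g\,\tilde v(\nu_g,\nu_g)=\nabla_{\nu_g}\nabla_{\nu_g}\tilde v$, and $h_g\equiv 0$, combining with the last identity of the previous paragraph yields $\Delta_g\tilde v|_{\partial M}=\Delta_{\iota^*g}v+\nabla_V\nabla_V v+a\,[\partial_\xi,\tilde V]v$, as claimed.

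The two displayed identities are routine once set up; the one delicate point — and the main obstacle — is the restriction-to-$\partial M$ step, where one must check that the a priori problematic terms in $\tilde V(\tilde V\tilde v)$ and $[\partial_\xi,\tilde V]\tilde v$ (those differentiating the $\partial_\xi$-component of $\tilde V$, or taking normal derivatives of the coefficients $\beta^i$) are annihilated either by the vanishing of $\alpha-a$ on $\partial M$ or by $\partial_\xi\tilde v\equiv 0$, so that the right-hand side is genuinely built from the intrinsic data $v$ on $X$ together with the extension fields $\nu_g,\tilde V$ in the stated manner. The sign of the $h_g$-term above depends on the second-fundamental-form convention, but this is irrelevant here since $h_g\equiv 0$.
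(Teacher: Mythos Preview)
Your reading of the (evidently mistyped) left-hand side as $\Delta_g\tilde v|_{\partial M}$ is the intended one, and your argument is correct and is precisely the ``straightforward computation'' the paper has in mind: the paper's own proof consists of the single sentence ``This is a straightforward computation, and note that $\partial_\xi\tilde v\equiv 0$,'' and your write-up simply unpacks that computation, using $\partial_\xi\tilde v\equiv 0$ at exactly the two places it is needed (to get $\nu_g\tilde v=\tilde V\tilde v$ and to kill $\tilde V(\partial_\xi\tilde v)$), together with the geodesic extension $\nabla_{\nu_g}\nu_g=0$ and the standard splitting $\Delta_g=\Delta_{\iota^*g}+h_g\,\partial_{\nu_g}+\mathrm{Hess}_g(\nu_g,\nu_g)$ along $\partial M$ under $h_g\equiv 0$. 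There is no substantive difference in approach.
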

\begin{proof}
    This is a straightforward computation, and note that $\partial_\xi \tv\equiv0.$
\end{proof}

This proposition tell us that, if $\tilde{\tv}(x,\xi,t):=v(x)$, then
$\Delta_{\bar{g}}\tilde{\tv}|_{W}=\nabla_{V}\nabla_V$non-vanishing 1st order differential operator, where $L'=$ 
}}
\fi
Still denoted by $ V $ as a global vector field on $ W $, we apply the same argument in \cite[Lemma 2.1]{RX2} to show that:
\begin{lemma}\label{Set:lemma1} If 
\begin{equation}\label{Set:eqn0}
\frac{g(\partial_{\xi},\partial_{\xi})}{g(\nu_{g},\partial_{\xi})^{2}}<
2,
\end{equation}
 then 
the operator
\begin{equation*}
 L':=\nabla_{V}\nabla_{V} - \Delta_{\sigma^{*}\bar{g}}
\end{equation*}
is elliptic on $ W $.
\end{lemma}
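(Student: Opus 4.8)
The plan is to read off the principal symbol of $ L' $ and show that the angle condition (\ref{Set:eqn0}) forces it to be positive definite. With our sign convention (so that $ -\Delta_{\sigma^{*}\bar{g}} $ is positive definite), the operator $ -\Delta_{\sigma^{*}\bar{g}} $ has principal symbol $ \eta \mapsto \lvert \eta \rvert^{2}_{\sigma^{*}\bar{g}} $ at a covector $ \eta \in T^{*}_{w}W $, while the second--order operator $ \nabla_{V}\nabla_{V} $ has leading part $ V^{i}V^{j}\partial_{i}\partial_{j} $ and hence principal symbol $ \eta \mapsto -\langle \eta, V(w) \rangle^{2} $, where $ \langle\,\cdot\,,\cdot\,\rangle $ is the natural pairing. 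Therefore the principal symbol of $ L' = \nabla_{V}\nabla_{V} - \Delta_{\sigma^{*}\bar{g}} $ is
\begin{equation*}
\sigma_{L'}(w,\eta) = \lvert \eta \rvert^{2}_{\sigma^{*}\bar{g}} - \langle \eta, V(w) \rangle^{2}.
\end{equation*}
Writing $ \langle \eta, V \rangle = \sigma^{*}\bar{g}(\eta^{\sharp}, V) $ and applying Cauchy--Schwarz gives $ \langle \eta, V \rangle^{2} \leqslant \lvert V \rvert^{2}_{\sigma^{*}\bar{g}}\,\lvert \eta \rvert^{2}_{\sigma^{*}\bar{g}} $, so $ \sigma_{L'}(w,\eta) \geqslant \big( 1 - \lvert V(w) \rvert^{2}_{\sigma^{*}\bar{g}} \big)\lvert \eta \rvert^{2}_{\sigma^{*}\bar{g}} $. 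Thus it suffices to prove the pointwise bound $ \lvert V \rvert_{\sigma^{*}\bar{g}} < 1 $ on $ W $.

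The second step is the norm computation, and here the angle condition enters. Since $ V $ is tangent to $ \partial M $ and is extended to $ W $ via $ \tau_{2,*} $ with no $ \partial_{t} $--component and independently of $ t $, one has $ \lvert V \rvert^{2}_{\sigma^{*}\bar{g}} = \lvert V \rvert^{2}_{\imath^{*}g} = g(V,V) $ on $ X \times \lbrace 0 \rbrace_{\xi} $, a quantity that does not depend on the $ t $--coordinate. Using $ V = \nu_{g} - a\partial_{\xi} $ with $ a = g(\partial_{\xi},\nu_{g})^{-1} $, together with $ g(\nu_{g},\nu_{g}) = 1 $ and $ a\,g(\nu_{g},\partial_{\xi}) = 1 $, I compute
\begin{equation*}
g(V,V) = g(\nu_{g},\nu_{g}) - 2a\,g(\nu_{g},\partial_{\xi}) + a^{2}g(\partial_{\xi},\partial_{\xi}) = -1 + \frac{g(\partial_{\xi},\partial_{\xi})}{g(\nu_{g},\partial_{\xi})^{2}}.
\end{equation*}
Hence $ \lvert V \rvert^{2}_{\sigma^{*}\bar{g}} < 1 $ on $ W $ is \emph{exactly} the hypothesis (\ref{Set:eqn0}). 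Combining with the first step, $ \sigma_{L'}(w,\eta) \geqslant (1 - \lvert V(w)\rvert^{2}_{\sigma^{*}\bar{g}})\lvert \eta \rvert^{2}_{\sigma^{*}\bar{g}} > 0 $ for all $ \eta \neq 0 $; and since $ 1 - \lvert V \rvert^{2}_{\sigma^{*}\bar{g}} $ is a positive continuous function on the compact manifold $ W $, it is bounded below by a positive constant, so $ L' $ is (uniformly) elliptic. This is the same argument as in \cite[Lemma 2.1]{RX2}.

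The argument is short, so the "main obstacle" is really just bookkeeping. First, the sign conventions must be tracked so that the symbol appears as the \emph{difference} $ \lvert \eta \rvert^{2} - \langle \eta, V \rangle^{2} $ rather than a sum; otherwise $ L' $ would be trivially elliptic and the angle hypothesis vacuous. The nontrivial content is precisely that without (\ref{Set:eqn0}) the coefficient $ 1 - \lvert V \rvert^{2} $ of $ \lvert \eta \rvert^{2} $ can vanish or turn negative — indeed, on $ M $ itself the analogous operator $ \nabla_{\nu_{g}}\nabla_{\nu_{g}} - \Delta_{g} $ has symbol $ \lvert \eta \rvert^{2} - \langle \eta, \nu_{g} \rangle^{2} $, which vanishes at $ \eta = \nu_{g}^{\flat} $ and so is never elliptic; this is the structural reason for passing to $ W $. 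Second, one must verify that lifting from $ M $ to $ M \times \mathbb{S}^{1}_{t} $ does not change $ \lvert V \rvert $, which holds because the lift of $ V $ has no $ \partial_{t} $--component, so the bound $ g(V,V) < 1 $ transfers verbatim to $ (W,\sigma^{*}\bar{g}) $. The threshold $ 2 $ in (\ref{Set:eqn0}) is exactly where the Cauchy--Schwarz estimate closes, since $ g(V,V) = \tfrac{g(\partial_{\xi},\partial_{\xi})}{g(\nu_{g},\partial_{\xi})^{2}} - 1 $.
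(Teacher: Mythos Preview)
Your proof is correct and is precisely the argument the paper defers to via \cite[Lemma~2.1]{RX2}: compute the principal symbol $\sigma_{L'}(w,\eta)=\lvert\eta\rvert^{2}_{\sigma^{*}\bar g}-\langle\eta,V\rangle^{2}$, apply Cauchy--Schwarz, and verify $\lvert V\rvert^{2}_{\sigma^{*}\bar g}=g(V,V)=\tfrac{g(\partial_{\xi},\partial_{\xi})}{g(\nu_{g},\partial_{\xi})^{2}}-1<1$ using (\ref{Set:eqn0}). Your remarks on sign conventions and on why the analogous operator on $M$ fails to be elliptic are accurate and match the motivation in the introduction.
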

\begin{proof} The same argument in \cite[Lemma 2.1]{RX2} follows.
\end{proof}

The next three lemmas and propositions are crucial to construct the conformal factor we will use in the main theorem. For the standard $ dt^{2} $-metric on $ \mathbb{S}^{1}_{t} $, we fix some chart $ U \ni P $ with chart map $ \Phi : \mathbb{S}^{1} \supset U \rightarrow (-1, 1) $ such that $ \Phi(P) = 0 $ and still with locally $ t $-variable. We start with the construction of the inhomogeneous term of our partial differential equation.
\begin{lemma}\label{Set:lemma2} 
For any  $ p \in \mathbb{N} $,  $ C \gg 1 $, and any positive $ \delta \ll 1 $.  
Then there exists a positive smooth function $F:W\to\R$ and small enough constant $ \epsilon \ll 1 $ such that $ F |_{X \times (-\frac{\epsilon}{2}, \frac{\epsilon}{2})_{t}} = C+1  $, $ F \equiv 0 $ outside $ X \times [-\epsilon, \epsilon]_{t} $ and $ \lVert  F \rVert_{\calL^{p}(W, \sigma^{*}\bar{g})} < \delta. $
\end{lemma}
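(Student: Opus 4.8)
The statement asks for a bump-type function on $W = X \times \mathbb{S}^1_t$ that equals $C+1$ near the slice $\{t=0\}$, vanishes outside a thin annular neighborhood $X \times [-\epsilon,\epsilon]_t$, and has arbitrarily small $\mathcal{L}^p$-norm. The key observation is that the $\mathcal{L}^p$-norm of a fixed-height bump scales like (height)$\times$(measure of support)$^{1/p}$, and since $X$ is compact with finite volume, shrinking the $t$-support shrinks the $\mathcal{L}^p$-norm to zero. So the plan is: first fix a smooth cutoff in the $t$-variable alone, then pull it back to $W$ via the projection onto $\mathbb{S}^1_t$, then choose $\epsilon$ small enough using a volume estimate.

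Concretely, I would proceed as follows. Work in the fixed chart $\Phi \colon U \to (-1,1)$ around $P$ with $\Phi(P)=0$ and $t$-coordinate. Fix a smooth function $\chi \colon \R \to [0,1]$ with $\chi \equiv 1$ on $[-\tfrac12,\tfrac12]$ and $\chi \equiv 0$ outside $[-1,1]$; this is a standard mollifier-type bump. For a parameter $\epsilon \in (0,1)$ define $\chi_\epsilon(t) := \chi(t/\epsilon)$, so $\chi_\epsilon \equiv 1$ on $[-\epsilon/2,\epsilon/2]$ and is supported in $[-\epsilon,\epsilon]$. Set $F_\epsilon := (C+1)\,(\chi_\epsilon \circ \Phi \circ \pi)$ where $\pi \colon W \to \mathbb{S}^1_t$ is the projection; since $\chi_\epsilon \circ \Phi$ extends by zero to a smooth function on all of $\mathbb{S}^1_t$ once $\epsilon < 1$, the function $F_\epsilon$ is smooth on $W$, equals $C+1$ on $X \times (-\epsilon/2,\epsilon/2)_t$, and vanishes outside $X \times [-\epsilon,\epsilon]_t$. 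It remains to estimate the norm: using that $0 \le F_\epsilon \le C+1$ and $F_\epsilon$ is supported in $X \times [-\epsilon,\epsilon]_t$,
\[
\lVert F_\epsilon \rVert_{\mathcal{L}^p(W,\sigma^*\bar g)}^p \;\le\; (C+1)^p \cdot \mathrm{vol}_{\sigma^*\bar g}\bigl(X \times [-\epsilon,\epsilon]_t\bigr).
\]
Because $\sigma^*\bar g$ is a smooth metric on the compact manifold $W$, the volume of the slab $X \times [-\epsilon,\epsilon]_t$ tends to $0$ as $\epsilon \to 0$ (it is bounded, e.g., by a constant times $\epsilon$ using that $\bar g$ restricted to this region is comparable to a product metric and $X$ has finite volume). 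Hence we may pick $\epsilon = \epsilon(C,\delta,p)$ so small that the right-hand side is less than $\delta^p$, which gives $\lVert F_\epsilon \rVert_{\mathcal{L}^p(W,\sigma^*\bar g)} < \delta$. Taking $F := F_\epsilon$ finishes the proof.

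There is essentially no serious obstacle here; the lemma is a soft cutoff construction, and the only mild point requiring care is the claim that $F_\epsilon$ is genuinely smooth across $\mathbb{S}^1_t$ (not merely on the chart $U$) — this is handled by noting $\chi_\epsilon \circ \Phi$ is compactly supported in $U$ for $\epsilon<1$ and so extends smoothly by zero. One should also record that $F \geq 0$ everywhere and $F$ is strictly positive on the open slab $X \times (-\epsilon,\epsilon)_t$ if one wants the "positive" adjective literally (or one can simply add a harmless remark that $F+\eta$ for small $\eta>0$ works if global positivity is desired, though the statement as used only needs the listed properties). The dependence $\epsilon = \epsilon(C,\delta,p)$ is exactly as asserted, and the integer $p$ plays no special role beyond fixing which $\mathcal{L}^p$-norm we control.
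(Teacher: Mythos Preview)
Your argument is correct and follows exactly the same construction as the paper: pull back a height-$(C+1)$ one-dimensional cutoff in the $t$-variable to $W$ and shrink its support until the $\mathcal{L}^{p}$-norm is below $\delta$. Your write-up is simply more explicit about the volume estimate and the smooth extension by zero than the paper's terse version.
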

\begin{proof}
Fix any $ p, C $ and $ \delta $. We take a constant function $ f = C + 1 : W \rightarrow \R $. There exists a nonnegative smooth function $  \phi :\mathbb{S}^{1}_{t} \rightarrow R $ such that 
\begin{equation*}
  \phi(t) \equiv 1, t \in \left(-\frac{\epsilon}{2}, \frac{\epsilon}{2}\right), \phi(t) = 0, t \notin [-\epsilon, \epsilon].
\end{equation*}
Set $ F = f \cdot \phi : W \rightarrow \R $. It follows that
\begin{equation*}
    \lVert F \rVert_{\calL^{p}(W, \sigma^{*}\bar{g})} < \delta
\end{equation*}
provided that $ \epsilon \ll 1 $ is small enough.
\end{proof}
\begin{remark}\label{Set:re0}
The function $ F $ in any local frame $ \lbrace \partial_{1}, \dotso, \partial_{n - 1}, \partial_{n + 1} = \partial_{t} \rbrace $ of $ W $ satisfies
\begin{equation*}
\partial_{i} F \equiv 0, \forall i \in \lbrace 1, \dotso, n - 1 \rbrace
\end{equation*}
based on the construction of $ F $ in Lemma \ref{Set:lemma2}.
\end{remark}
We now construct the first candidate of our conformal factor with small enough $ \calC^{1, \alpha} $-norm. We define the $\calC^{1, \alpha} $-norm on $ W $ by fixing a chart cover $ W = \bigcup_{i} (U_{i}, \varphi_{i}) $ such that
\begin{equation*}
    \lVert u \rVert_{\calC^{1, \alpha}(W)} = \lVert u \rVert_{\calC^{0}(W)} + \sup_{i, k} \lVert \partial_{x_{i}^{k}}u \rVert_{\calC^{0}(U_{i})} + \sup_{i, k} \sup_{x \neq x', x, x' \in U_{i}} \frac{\left\lvert \partial_{x_{i}^{k}}u(x) - \partial_{x_{i}^{k}}u(x') \right\rvert}{\lvert x - x' \rvert^{\alpha}}
\end{equation*}
where $ \partial_{x_{i}^{k}} u $ are local representations in $ U_{i} $ with local coordinates $ \lbrace x_{i}^{1}, \dotso, x_{i}^{n - 1}, x_{i}^{n + 1} = t \rbrace $ and associated local frames $ \lbrace \partial_{x_{i}^{k}} \rbrace $. 
\begin{proposition}\label{Set:prop1}
Let $ (W, \sigma^{*} \bar{g}) $ be as above 
. Assume that (\ref{Set:eqn0}) holds. For any positive constant $ \eta \ll 1 $, any positive constant $ C $, and any $ p > n = \dim W $, there exists an associated $ F $ and $ \delta $ in the sense of Lemma \ref{Set:lemma2}, such that the following partial differential equation     
\begin{equation}\label{Set:eqn3}
L u : = 4\nabla_{V} \nabla_{V} u - 4\Delta_{\sigma^{*} \bar{g}}  u + R_{\bar{g}} |_{W}u =
F  \; {\rm in} \; W
\end{equation}
admits a unique smooth solution $ u $ with
\begin{equation}\label{Set:eqn4}
\lVert u \rVert_{\calC^{1, \alpha}(W)} < \eta
\end{equation}
for some $ \alpha \in (0, 1) $ such that $ \alpha \geqslant 1 - \frac{n}{p} $.
\end{proposition}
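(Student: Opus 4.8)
The plan is to solve the linear elliptic PDE (\ref{Set:eqn3}) on the closed manifold $W$ by standard $\calL^p$-theory and then upgrade the regularity and smallness of the solution. First I would observe that by Lemma \ref{Set:lemma1}, under the hypothesis (\ref{Set:eqn0}) the second-order operator $4\nabla_V\nabla_V - 4\Delta_{\sigma^*\bar g}$ is elliptic on $W$; adding the zeroth-order term $R_{\bar g}|_W$, which is strictly positive since $R_{\bar g}>0$ (and the product metric has $R_{\bar g}=R_g>0$ after the normalization $h_g\equiv 0$ made at the start of \S2), gives an operator $L$ that is elliptic with positive potential. Hence $L$ has trivial kernel: if $Lu=0$, pairing with $u$ in $\calL^2(W)$ and integrating by parts against a suitably chosen volume/density makes the principal part non-negative and the $\int R_{\bar g}|_W u^2$ term forces $u\equiv 0$ — or, more robustly, one invokes the maximum principle for the second-order elliptic operator with positive zeroth order coefficient. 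Self-adjointness (or at least the Fredholm alternative) on the closed manifold $W$ then yields, for any $F\in\calL^p(W)$ with $p>n$, a unique weak solution $u\in W^{2,p}(W)$ of $Lu=F$, with the a priori bound $\|u\|_{W^{2,p}(W)}\le C_0\|F\|_{\calL^p(W)}$ for a constant $C_0=C_0(W,\sigma^*\bar g,V,p)$ independent of $F$.

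Next I would run the bootstrap: since the coefficients of $L$ and $R_{\bar g}|_W$ are smooth and $F$ is smooth (Lemma \ref{Set:lemma2}), interior (hence global, as $W$ is closed) Schauder/elliptic regularity promotes $u$ to $\calC^\infty(W)$; this gives the ``unique smooth solution'' claim. For the $\calC^{1,\alpha}$-smallness (\ref{Set:eqn4}) I would use the Sobolev embedding $W^{2,p}(W)\hookrightarrow\calC^{1,\alpha}(W)$ valid for $p>n$ with $\alpha\le 1-\tfrac np$, so that
\begin{equation*}
\|u\|_{\calC^{1,\alpha}(W)}\le C_1\|u\|_{W^{2,p}(W)}\le C_1C_0\|F\|_{\calL^p(W)}<C_1C_0\,\delta .
\end{equation*}
Given $\eta\ll1$, one first fixes $p>n$ and the resulting constants $C_0,C_1$ (which depend only on $W$, the metric, $V$, and $p$, not on $F$ or $\delta$), then invokes Lemma \ref{Set:lemma2} with that $p$, the prescribed $C$, and $\delta:=\eta/(2C_0C_1)$ to produce $F$ with $\|F\|_{\calL^p(W)}<\delta$; this forces $\|u\|_{\calC^{1,\alpha}(W)}<\eta$, as desired. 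The order of quantifiers is exactly the one in the statement — $\eta$ and $C$ are given first, then $F$ and $\delta$ are chosen in terms of them.

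The one genuine subtlety, and what I expect to be the main obstacle, is the functional-analytic setup for the operator $L':=\nabla_V\nabla_V-\Delta_{\sigma^*\bar g}$: although it is elliptic, it is \emph{not} manifestly self-adjoint with respect to $\dvols$, because $\nabla_V\nabla_V$ is a genuine second-order operator whose symbol is $V^\flat\otimes V^\flat$ and whose adjoint involves the divergence of $V$ and lower-order drift terms. To apply the Fredholm alternative cleanly I would either (i) symmetrize — replace the naive $\calL^2(W,\dvols)$ pairing by the weighted inner product for which $\nabla_V\nabla_V-\Delta$ becomes (formally) self-adjoint, following the construction in \cite[Section 2]{RX2}, or (ii) bypass self-adjointness entirely: $L$ is a second-order uniformly elliptic operator on the closed manifold $W$ with smooth coefficients and zeroth-order coefficient $R_{\bar g}|_W>0$, so the strong maximum principle gives injectivity on $\calC^{2}$, and the standard continuity-method / index-zero argument for such operators on closed manifolds gives surjectivity onto $\calL^p$. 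Route (ii) is the cleanest and makes the positivity of $R_{\bar g}$ (hence the reduction $h_g\equiv0$, $R_g>0$) do the essential work in ruling out a kernel; I would spell that out, with the a priori estimate $\|u\|_{W^{2,p}}\le C_0(\|Lu\|_{\calL^p}+\|u\|_{\calL^p})$ combined with the maximum-principle bound $\|u\|_{\calC^0}\le \|F/R_{\bar g}|_W\|_{\calC^0}$ to absorb the lower-order term and obtain the clean estimate $\|u\|_{W^{2,p}(W)}\le C_0\|F\|_{\calL^p(W)}$ used above.
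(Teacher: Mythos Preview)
Your approach is essentially the paper's: ellipticity from Lemma~\ref{Set:lemma1}, injectivity from the maximum principle using $R_{\bar g}|_W>0$, existence via the Fredholm alternative, smoothness by bootstrap, and the $\calC^{1,\alpha}$-smallness from $W^{2,p}\hookrightarrow\calC^{1,\alpha}$ combined with the improved estimate $\|u\|_{W^{2,p}}\le C_0\|F\|_{\calL^p}$. The paper obtains that improved estimate by the injectivity-based argument of \cite[Proposition~2.1]{RX2}, which is exactly what your first paragraph invokes (bounded inverse of an injective Fredholm operator).

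One caution about your final paragraph: the maximum-principle bound $\|u\|_{\calC^0}\le \|F/R_{\bar g}|_W\|_{\calC^0}$ does \emph{not} give the clean estimate $\|u\|_{W^{2,p}}\le C_0\|F\|_{\calL^p}$, because by construction (Lemma~\ref{Set:lemma2}) $F$ has small $\calL^p$-norm but large $\calC^0$-norm, namely $C+1$. Feeding that bound into $\|u\|_{W^{2,p}}\le C_0(\|F\|_{\calL^p}+\|u\|_{\calL^p})$ yields only $\|u\|_{W^{2,p}}\le C_0\bigl(\|F\|_{\calL^p}+{\rm Vol}(W)^{1/p}(C+1)/\min R_{\bar g}\bigr)$, which is not small. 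The correct absorption is the abstract one you already stated in paragraph one: $L$ injective and Fredholm of index zero on the closed manifold $W$ forces $L^{-1}:\calL^p\to W^{2,p}$ to be bounded, giving $\|u\|_{W^{2,p}}\le C_0\|Lu\|_{\calL^p}=C_0\|F\|_{\calL^p}$ directly. Stick with that route when you write it out.
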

\begin{proof}
Note that $ W $ is a closed, oriented Riemannian manifold. Fix $ \eta \ll 1, C $, and $ p > n $. We then fix some $ \alpha > 0 $ such that $ 1 + \alpha \geqslant 2 - \frac{n}{p} $. Denote $ C = C(W, g, n, p, L) $ by the constant of $ \calL^{p} $ elliptic regularity estimates with respect to $ L $, and $ C' = C'(W, g, n, p, \alpha) $ by the constant of the Sobolev embedding $ W^{2, p} \hookrightarrow \calC^{1, \alpha} $. Fix $ \delta $ such that $ \delta C C' < \eta $. Finally we  choose an associated $ F $ in the sense of Lemma \ref{Set:lemma2}.

By assumption, $ R_{g} > 0 $ on $ M $, therefore $ R_{\bar{g}} > 0 $ on $ M \times \mathbb{S}^{1}_{t} $, hence $ R_{\bar{g}} |_{W} > 0 $ uniformly. By Lemma \ref{Set:lemma1}, the operator in (\ref{Set:eqn3}) is elliptic. By maximum principle on closed manifolds, the elliptic operator is an injective operator. It follows that there exists a unique solution $ u \in H^{1}(W, \sigma^{*}\bar{g}) $ of (\ref{Set:eqn3}) by Fredholm alternative. By standard $ H^{s} $-type elliptic theory and the smoothness of $ F $, $ u \in \calC^{\infty}(W) $. 

By standard $ \calL^{p} $-regularity theory \cite{Niren4}, 
\begin{equation*}
    \lVert u \rVert_{W^{2, p}(W, \sigma^{*} \bar{g})} \leqslant C \left( \lVert F \rVert_{\calL^{p}(W, \sigma^{*} \bar{g})} + \lVert u \rVert_{\calL^{p}(W, \sigma^{*}\bar{g})} \right).
\end{equation*}
Due to the injectivity of the operator, a very similar argument of \cite[Proposition 2.1]{RX2} shows that the $ \calL^{p}$ estimates can be improved by
\begin{equation*}
     \lVert u \rVert_{W^{2, p}(W, \sigma^{*} \bar{g})} \leqslant C \lVert F \rVert_{\calL^{p}(W, \sigma^{*} \bar{g})}.
\end{equation*}
By Sobolev embedding, it follows that
\begin{equation*}
    \lVert u \rVert_{\calC^{1, \alpha}(W)} \leqslant C' \lVert u \rVert_{W^{2, p}(W, \sigma^{*}\bar{g})} \leqslant CC'\lVert F \rVert_{\calL^{p}(W, \sigma^{*} \bar{g})} < CC' \delta < \eta. 
\end{equation*}
\end{proof}
\begin{remark}\label{Set:re1}
By maximum principle and the nonnegativity of $ F $, the solution of (\ref{Set:eqn3}) satisfies $ u \geqslant 0, u \not\equiv 0 $ in $ W $.
\end{remark}
We now give our candidate of the conformal factor on $ M \times \mathbb{S}^{1}_{t} $. To begin with, 
we define a new function
\begin{equation*}
    u_{0} : = u + 1, u_{0} : W \rightarrow \R
\end{equation*}
where $ u $ is the solution of (\ref{Set:eqn3}). With the natural projection $ \pi : M \times \mathbb{S}^{1}_{t} \rightarrow W $, we can pullback $ u_{0} $ to $ M \times \mathbb{S}_{t}^{1} $ by $ u' : = (\pi)^{*} u_{0} $. We also pullback $ u $ to $ M \times \mathbb{S}_{t}^{1} $ by $ \bar{u} : = (\pi)^{*} u $ for later use. We have
\begin{equation}\label{Set:eqn6}
u' |_{W} =u_{0} = u + 1, \partial_{\xi}^{l} u' = 0 \; {\rm on} \; M \times \mathbb{S}^{1}_{t}, \forall l \in \mathbb{Z}_{> 0}.
\end{equation}
Denote $ \tilde{u}' : = \tau_{1}^{*} u' : M \rightarrow \R $. Following our diagram (\ref{intro:eqn2}), we have
\begin{equation*}
    \imath^{*} \tilde{u}' = \imath^{*} \tau_{1}^{*} u' = \tau_{2}^{*} \sigma^{*} u' = \tau_{2}^{*} u_{0} = u_{0} |_{X \times \lbrace 0 \rbrace_{\xi} \times \lbrace P \rbrace_{t}}.
\end{equation*}
Clearly all those functions defined above are positive functions on corresponding spaces. The function $ (u')^{\frac{4}{n - 2}} $ will be our choice of conformal factor on $ M \times \mathbb{S}_{t}^{1} $. The relations above hold analogously for $ \bar{u} $ also.
\medskip

Usually, the best estimate of the solution of (\ref{Set:eqn3}) is given by (\ref{Set:eqn4}). Due to the speciality of the product metric $ \bar{g} $ and the construction of $ F $, we are able to give a partial $ \calC^{2} $-estimate in terms of $ \left\lVert \frac{\partial^{2} u}{\partial t^{2}} \right\rVert_{\calC^{0}(X \times \lbrace 0 \rbrace_{\xi} \times \lbrace P \rbrace_{t})} $, as the next lemma shows.
\begin{lemma}\label{Set:lemma3}
Choosing $ \epsilon $ defined in Lemma \ref{Set:lemma2} to be small enough that will be determined below. Let $ (W, \sigma^{*} \bar{g}) $, $ p, C, \delta $ and $ F $ be the same as in Proposition \ref{Set:prop1}. Let $ u $ be the associated solution of (\ref{Set:eqn3}). There exists a constant $ \eta' \ll 1 $ such that
\begin{equation}\label{Set:eqn5}
    \left\lVert \frac{\partial^{2} u}{\partial t^{2}} \right\rVert_{\calC^{0}\left(X \times \lbrace 0 \rbrace_{\xi} \times \left(-\frac{\epsilon}{4}, \frac{\epsilon}{4} \right)_{t} \right)} < \eta'.
\end{equation}
\end{lemma}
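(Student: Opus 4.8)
The plan is to exploit the product structure of $\bar g = g \oplus dt^2$ together with Remark \ref{Set:re0}, which records that in a local frame $\{\partial_1,\dots,\partial_{n-1},\partial_t\}$ adapted to $W = X\times\mathbb{S}^1_t$ the inhomogeneous term $F$ depends only on $t$, i.e.\ $\partial_i F\equiv 0$. The idea is that on the slab $X\times(-\tfrac{\epsilon}{2},\tfrac{\epsilon}{2})_t$ the PDE \eqref{Set:eqn3} almost decouples in the $t$-direction, so $\tfrac{\partial^2 u}{\partial t^2}$ can be solved for algebraically from the equation, and every other term in that expression is already controlled by the $\calC^{1,\alpha}$-smallness \eqref{Set:eqn4}, by the smallness of $\|F\|_{\calL^p}$, or (after a further localization) by $\epsilon$ itself. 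First I would write out $L u = F$ in the adapted local frame near a point of $X\times\{0\}_\xi\times\{P\}_t$: the operator $4\nabla_V\nabla_V u - 4\Delta_{\sigma^*\bar g}u + R_{\bar g}|_W u$ expands into a sum of a second-order part $4(g^{VV} - \bar g^{tt})\partial_t^2 u$ plus terms that are either purely tangential second derivatives $\partial_i\partial_j u$, first derivatives $\partial_k u$ (with bounded coefficients, the Christoffel/metric data of $\sigma^*\bar g$ and the components $b^i$ of $V$), or the zeroth-order term $R_{\bar g}|_W u$. The coefficient of $\partial_t^2 u$ is $\bar g^{tt} = 1$ minus the $t$-component contribution from $\nabla_V\nabla_V$, but since $V = \sum_{i=1}^{n-1} b^i\partial_i$ is tangential to $\partial M$ and hence has no $\partial_\xi$ — and after the construction on $W$, no $\partial_t$ — component, $\nabla_V\nabla_V$ contributes no $\partial_t^2$ term at all. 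Thus the coefficient of $\partial_t^2 u$ is a nonvanishing constant (namely $-4$ from $-4\Delta_{\sigma^*\bar g}$ acting in the $t$-slot of the product), and the equation can be solved for $\partial_t^2 u$:
\begin{equation*}
\partial_t^2 u = -\tfrac14\Bigl(F - R_{\bar g}|_W u - 4\nabla_V\nabla_V u + 4\sum_{i,j\le n-1}(\text{metric terms})\partial_i\partial_j u + (\text{first-order terms})\Bigr).
\end{equation*}

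Next I would estimate the right-hand side pointwise on $X\times\{0\}_\xi\times(-\tfrac{\epsilon}{4},\tfrac{\epsilon}{4})_t$. On this set $F \equiv C+1$ is a constant — but more to the point, by Remark \ref{Set:re0} $F$ has vanishing tangential derivatives everywhere, and the purely $t$-dependent piece of $F$ is smooth with bounds independent of $\epsilon$ only if we are careful; the cleanest route is to note that on the strictly interior slab $(-\tfrac{\epsilon}{4},\tfrac{\epsilon}{4})_t \subset (-\tfrac{\epsilon}{2},\tfrac{\epsilon}{2})_t$ we have $F\equiv C+1$, a genuine constant, so $F$ contributes the fixed quantity $C+1$ and nothing else. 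The term $R_{\bar g}|_W u$ is bounded by $\|R_{\bar g}\|_{\calC^0}\cdot\|u\|_{\calC^0} \le \|R_{\bar g}\|_{\calC^0}\cdot\eta$. The term $\nabla_V\nabla_V u$ involves only tangential first and second derivatives of $u$ with coefficients built from $b^i$ and their derivatives; the first-derivative contributions are bounded by $C(W,g)\cdot\|u\|_{\calC^{1,\alpha}} < C(W,g)\,\eta$, while the genuinely second-order tangential pieces $\partial_i\partial_j u$ are the one place the naive argument stalls, since \eqref{Set:eqn4} gives no control on $\partial_i\partial_j u$. To handle these I would differentiate the equation tangentially: apply $\partial_k$ to $Lu = F$; since $\partial_k F = 0$ by Remark \ref{Set:re0}, the function $w := \partial_k u$ solves a homogeneous-in-$F$ equation $L w = [\,\partial_k, L\,]u$, whose right-hand side is a first-order operator applied to $u$ with bounded coefficients, hence has $\calL^p$-norm $\lesssim \|u\|_{W^{1,p}} \lesssim \|u\|_{\calC^{1,\alpha}} < \eta$ (using the improved estimate from Proposition \ref{Set:prop1} and $p>n$). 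By $\calL^p$-elliptic regularity and Sobolev embedding on the closed manifold $W$ this gives $\|\partial_k u\|_{\calC^{1,\alpha}(W)} \le C C' \|[\,\partial_k,L\,]u\|_{\calL^p} < C C' C'' \eta$, i.e.\ a bound $\|u\|_{\calC^{2,\alpha}(W)} \le K\eta$ for a constant $K = K(W,g,n,p)$ independent of the small parameters. In particular all of $\partial_i\partial_j u$ are bounded by $K\eta$.

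Assembling these estimates in the displayed formula for $\partial_t^2 u$, the right-hand side on $X\times\{0\}_\xi\times(-\tfrac{\epsilon}{4},\tfrac{\epsilon}{4})_t$ is bounded by $\tfrac14\bigl(C+1\bigr) + K'\eta$ for a constant $K'$ depending only on $(W,g,n,p)$. This is not yet small — it contains the fixed constant $\tfrac14(C+1)$. The resolution is that the statement of the lemma allows us to choose $\epsilon$ \emph{after} $C$: I would instead integrate twice in $t$ against the known boundary behavior. Concretely, since $u$ is smooth and $W$ is compact, on the slab $u(x,\xi_0,t)$ as a function of $t$ has $\partial_t^2 u$ bounded as above, but by the one-dimensional mean value / Taylor argument, $\sup_{|t|<\epsilon/4}|\partial_t u|$ is controlled by $\|u\|_{\calC^0}/\epsilon$-type terms \emph{only if} $u$ had small range over the slab — which it does, since $\|u\|_{\calC^0(W)}<\eta$. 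Better: treat the ODE $\partial_t^2 u = \Psi(t)$ with $\|\Psi\|_{\calC^0}\le \tfrac14(C+1)+K'\eta$ on an interval of length $\epsilon/2$; combined with $\|u\|_{\calC^0}<\eta$ and $\|\partial_t u\|_{\calC^0}<\eta$ at the endpoints, elementary ODE estimates force $\|\partial_t^2 u\|_{\calC^0(-\epsilon/4,\epsilon/4)} \lesssim \eta/\epsilon^2 + (\text{oscillation terms})$; choosing $\epsilon$ small relative to $\eta$ goes the wrong way, so the honest statement is that we first fix $\eta'$, then the above shows $\|\partial_t^2u\|_{\calC^0}\le \tfrac14(C+1)+K'\eta$, and we must instead observe that the lemma's $\eta'$ is merely required to be "$\ll 1$" in the sense used in \S3 — here I expect the intended argument is that $\tfrac14(C+1)+K'\eta$, while depending on $C$, is a fixed finite constant that is harmless in the main theorem's final estimate; rereading, the cleanest correct formulation is: choose $\epsilon$ small enough that the transition region $[\tfrac{\epsilon}{4},\epsilon]_t$ where $F$ varies contributes negligibly, and conclude $\|\partial_t^2 u\|_{\calC^0} \le K\eta =: \eta'$ by the $\calC^{2,\alpha}$-bound above applied directly, since $\partial_t^2 u$ is itself one of the second derivatives controlled by $K\eta$. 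The main obstacle is precisely this bookkeeping: obtaining a genuine $\calC^2$-bound on $u$ (not just $\calC^{1,\alpha}$) from the equation, for which the key mechanism is Remark \ref{Set:re0} — $\partial_k F \equiv 0$ lets tangential derivatives of $u$ inherit the smallness via a second round of $\calL^p$-regularity — and then reading off $\partial_t^2 u$ as one component of $\nabla^2 u$, whose full norm is then $\le K\eta$, so $\eta' := K\eta$ works.
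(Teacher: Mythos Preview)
Your proposal contains a genuine gap, and you in fact identified it yourself without resolving it. When you solve the equation algebraically for $\partial_t^2 u$ on the inner slab, the right-hand side carries the term $F \equiv C+1$, giving $\lvert \partial_t^2 u\rvert \ge \tfrac14(C+1) - O(\eta)$, which is \emph{large}. Your attempted rescue via tangential differentiation does not fix this: applying $\partial_k$ (with $k\le n-1$) to $Lu=F$ and using $\partial_k F=0$ yields an $\calL^p$-bound on $\partial_k u$ in $W^{2,p}$, hence $\calC^{1,\alpha}$-control on $\partial_k\partial_j u$ for $k\le n-1$ and arbitrary $j$. But this never touches the pure $\partial_t^2 u$ component; your closing sentence, that ``$\partial_t^2 u$ is itself one of the second derivatives controlled by $K\eta$,'' is simply false. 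And you cannot differentiate globally in $t$ instead, because $\partial_t F$ is supported in the transition region of width $\sim\epsilon$ with amplitude $\sim (C+1)/\epsilon$, so $\lVert\partial_t F\rVert_{\calL^p(W)}$ blows up as $\epsilon\to 0$.

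The paper's mechanism is different. Since the coefficients of $L$ are constant in $t$ and $F\equiv C+1$ on $U_2 = X\times\{0\}_\xi\times(-\tfrac{\epsilon}{2},\tfrac{\epsilon}{2})_t$, one differentiates \emph{twice} in $t$ to obtain the \emph{local} homogeneous equation $L(\partial_t^2 u)=0$ on $U_2$. Interior elliptic estimates on this thin slab then control $\partial_t^2 u$, but the constants depend on the slab width $\sim\epsilon$; the paper handles this by rescaling $t=\epsilon t'$ (equivalently passing to the metric $\epsilon^{-2}\bar g$) so that the domain becomes unit-sized, and then tracking how the $H^s$, $\calL^2$, and Sobolev constants scale. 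Combined with the Yamabe--Sobolev inequality on $M\times\mathbb{S}^1_t$ (which is conformally invariant and hence $\epsilon$-independent), this produces
\[
\left\lVert \tfrac{\partial^2 u}{\partial t^2}\right\rVert_{\calC^0(U_1)} \le \bar D\,\eta + \bar D'\,(C+1)\,\epsilon^{1/2},
\]
and the second term is killed by taking $\epsilon$ small \emph{after} $C$ is fixed. The essential idea you are missing is that the large constant $C+1$ must be annihilated by differentiation (it is constant on the inner slab, so two $t$-derivatives do this), and the resulting equation is only local, forcing an interior estimate with a scaling argument rather than the global closed-manifold regularity you invoke.
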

\begin{proof}
Set $ U_{1} = X \times \lbrace 0 \rbrace_{\xi} \times \left(-\frac{\epsilon}{4}, \frac{\epsilon}{4} \right) $, $ U_{2} =  X \times \lbrace 0 \rbrace_{\xi} \times \left(-\frac{\epsilon}{2}, \frac{\epsilon}{2} \right) $, $ O_{1} =  X \times \lbrace 0 \rbrace_{\xi} \times \left(-\frac{1}{4}, \frac{1}{4} \right) $, $ O_{2} =  X \times \lbrace 0 \rbrace_{\xi} \times \left(-\frac{1}{2}, \frac{1}{2} \right) $. Since $ \bar{g} = g \oplus dt^{2} $, the vector field $ V $ and $ \Delta_{\sigma^{*} \bar{g}} $ are constant with respect to $ t $; furthermore the scalar curvature $ R_{\bar{g}} $ is constant along $ t $-direction. In addition, $ F \equiv C + 1 $ on $ U_{2} $ by Lemma \ref{Set:lemma2}, we apply $ \frac{\partial^{2}}{\partial t^{2}} $ on both sides of (\ref{Set:eqn3}) in $ U $, which follows that
\begin{equation*}
  L_{\sigma^{*} \bar{g}} \left( \frac{\partial^{2}u}{\partial t^{2}} \right) : =  4\nabla_{V} \nabla_{V} \left( \frac{\partial^{2}u}{\partial t^{2}} \right) - 4\Delta_{\sigma^{*} \bar{g}}  \left( \frac{\partial^{2}u}{\partial t^{2}} \right) + R_{\bar{g}} |_{W}\left( \frac{\partial^{2}u}{\partial t^{2}} \right) = 0  \; {\rm in} \; U_{1}.
\end{equation*}
Now consider $ t = \epsilon t' $, the new $ t' $-variable is associated with the metric $ (dt')^{2} = \epsilon^{-2} dt^{2} $ on $ \mathbb{S}^{1} $. We have
\begin{equation*}
   (\epsilon^{-1})^{2} \left( 4\nabla_{V} \nabla_{V} \left( \frac{\partial^{2}u}{\partial (t')^{2}} \right) - 4\Delta_{\sigma^{*} \bar{g}}  \left( \frac{\partial^{2}u}{\partial (t')^{2}} \right) + R_{\bar{g}} |_{W}\left( \frac{\partial^{2}u}{\partial (t')^{2}} \right) \right)= 0  \; {\rm in} \; O_{1}.
\end{equation*}
Equivalently, the following PDE holds in $ O_{1} $ with the new metric $ \bar{g} \mapsto \epsilon^{-2} \bar{g} $,
\begin{equation}\label{Set:eqn6a}
L_{\sigma^{*}(\epsilon^{-2}\bar{g})}\left( \frac{\partial^{2}u}{\partial (t')^{2}} \right) : = 4\nabla_{V_{\epsilon^{-2}\bar{g}}} \nabla_{V_{\epsilon^{-2}\bar{g}}} \left( \frac{\partial^{2}u}{\partial (t')^{2}} \right) - 4\Delta_{\sigma^{*} (\epsilon^{-2}\bar{g})}  \left( \frac{\partial^{2}u}{\partial (t')^{2}} \right) + R_{\epsilon^{-2}\bar{g}} |_{W}\left( \frac{\partial^{2}u}{\partial (t')^{2}} \right) = 0  \; {\rm in} \; O_{1}.
\end{equation}
Here $ V_{\epsilon^{-2}\bar{g}} $ is the vector field with respect to the new metric $ \epsilon^{-2} \bar{g} $ satisfying $ V_{\epsilon^{-2}\bar{g}} = \epsilon^{-1} V $.

We now estimate $ \lVert \frac{\partial^{2}u}{\partial (t')^{2}} \rVert_{\calC^{0}(O_{1})} $. We set $ s > 0 $ such that $ s - \frac{n}{2} = 1 + \alpha \geqslant 2 - \frac{n}{p} $, where $ p $ and $ \alpha $ are given in Proposition \ref{Set:prop1}. With usual metric $ \sigma^{*} \bar{g} $, we have $ H^{s}$-type local elliptic regularity estimates for second order elliptic operator $ L_{\sigma^{*}\bar{g}} $,
\begin{equation*}
\lVert v \rVert_{H^{s}(U, \sigma^{*} \bar{g})} \leqslant D_{s} \left( \lVert L_{\sigma^{*} \bar{g}} u \rVert_{H^{s - 2}(V, \sigma^{*} \bar{g})} + \lVert v \rVert_{\calL^{2}(V, \sigma^{*} \bar{g})} \right), \forall v \in \calC^{\infty}(V).
\end{equation*}
where the constant $ D_{s} $ only depends on the operator $ L_{\sigma^{*}\bar{g}} $, the metric, the degree $ s $ and the domain $ U, V $. For the scaled metric $ \sigma^{*} (\epsilon^{-2} \bar{g}) $, we apply the local estimates and the second order elliptic operator $ L_{\sigma^{*}(\epsilon^{-2}\bar{g})} $ in (\ref{Set:eqn6a}), 
\begin{equation}\label{Set:eqn70a}
\begin{split}
\left\lVert \frac{\partial^{2}u}{\partial (t')^{2}} \right\rVert_{H^{s}(O_{1}, \sigma^{*}(\epsilon^{-2} \bar{g}))} & \leqslant \epsilon^{-\frac{n}{2}} \left\lVert \frac{\partial^{2}u}{\partial (t')^{2}} \right\rVert_{H^{s}(O_{1}, \sigma^{*} \bar{g})} \\
& \leqslant \epsilon^{-\frac{n}{2}} D_{s} \left\lVert \frac{\partial^{2}u}{\partial (t')^{2}} \right\rVert_{\calL^{2}(O_{2}, \sigma^{*} \bar{g})} \leqslant  D_{s} \left\lVert \frac{\partial^{2}u}{\partial (t')^{2}} \right\rVert_{\calL^{2}(O_{2}, \sigma^{*}(\epsilon^{-2} \bar{g}))}.
\end{split}
\end{equation}
Applying the local $ H^{2} $-type local elliptic estimate for (\ref{Set:eqn3}) and H\"older's inequality,
\begin{equation}\label{Set:eqn7a}
    \begin{split}
        \left\lVert \frac{\partial^{2}u}{\partial (t')^{2}} \right\rVert_{\calL^{2}(O_{2}, \sigma^{*} (\epsilon^{-2} \bar{g}))} & = \epsilon^{2 - \frac{n}{2}} \left\lVert \frac{\partial^{2}u}{\partial t^{2}} \right\rVert_{\calL^{2}(O_{2}, \sigma^{*} \bar{g})} \leqslant  \epsilon^{2 - \frac{n}{2}} \lVert u \rVert_{H^{2}(O_{2}, \sigma^{*}\bar{g})} \\
        & \leqslant \epsilon^{2 - \frac{n}{2}} D_{2} \left( \lVert F \rVert_{\calL^{2}(W, \sigma^{*}\bar{g})} + \lVert u \rVert_{\calL^{2}(W, \sigma^{*}\bar{g})} \right) \\
        & \leqslant \epsilon^{2 - \frac{n}{2}}D_{2}\lVert F \rVert_{\calL^{2}(W, \sigma^{*}\bar{g})} + \epsilon^{2 - \frac{n}{2}}D_{1} D_{2}\lVert u \rVert_{\calL^{\frac{2(n + 1)}{n - 1}}(W, \sigma^{*}\bar{g})} \\
        & \leqslant \epsilon^{2} D_{2}  \lVert F \rVert_{\calL^{2}(W, \sigma^{*}(\epsilon^{-2}\bar{g}))} + \epsilon^{2 - \frac{n}{n + 1}} D_{1}D_{2} \lVert u \rVert_{\calL^{\frac{2(n + 1)}{n - 1}}(W, \sigma^{*} (\epsilon^{-2} \bar{g}))}.
    \end{split}
\end{equation}
Similarly, the Sobolev embedding inequality for the scaled metric $ \sigma^{*} (\epsilon^{-2} \bar{g}) $ says
\begin{equation}\label{Set:eqn7}
\left\lVert \frac{\partial^{2}u}{\partial (t')^{2}} \right\rVert_{\calC^{0}(O_{1})} \leqslant D_{0} \epsilon^{\frac{n}{2}} \left\lVert \frac{\partial^{2}u}{\partial (t')^{2}} \right\rVert_{H^{s}(O_{1}, \sigma^{*}(\epsilon^{-2} \bar{g}))}.
\end{equation}
Here $ D_{0}, D_{1}, D_{2} $ are independent of $ \epsilon $ and $ u $.

On $ M \times \mathbb{S}^{1}_{t} $ with $ \dim (M \times \mathbb{S}^{1}_{t} ) = n + 1 $, the metric $ \bar{g} $ has positive relative Yamabe constant $ \lambda : = \lambda(M \times \mathbb{S}^{1}_{t}, \partial (M \times \mathbb{S}^{1}_{t}), [\bar{g}]) > 0 $. It follows from the definition of the relative Yamabe constant that for all nontrivial $ v \in \calC^{\infty}(M \times \mathbb{S}_{t}^{1}) $, the Yamabe quotient gives
\begin{align*}
\lVert v \rVert_{\calL^{\frac{2(n + 1)}{n - 1}}(M \times \mathbb{S}^{1}_{t}, \bar{g})}^{2} & \leqslant \lambda(M \times \mathbb{S}^{1}_{t}, \partial (M \times \mathbb{S}^{1}_{t}), [\bar{g}])^{-1} \left( \frac{4n}{n - 1} \lVert \nabla_{\bar{g}} v \rVert_{\calL^{2}(M \times \mathbb{S}^{1}_{t}, \bar{g})}^{2}  \right) \\
& \qquad + \lambda(M \times \mathbb{S}^{1}_{t}, \partial (M \times \mathbb{S}^{1}_{t}), [\bar{g}])^{-1} \left( \int_{M \times \mathbb{S}^{1}_{t}} R_{\bar{g}} v^{2} d\text{Vol}_{\bar{g}} + 2n \int_{\partial (M \times \mathbb{S}^{1}_{t})} h_{\bar{g}} v^{2} \sigma_{\bar{g}} \right) \\
& = \lambda^{-1} \left( \frac{4n}{n - 1} \lVert \nabla_{\bar{g}} v \rVert_{\calL^{2}(M \times \mathbb{S}^{1}_{t}, \bar{g})}^{2} + \int_{M \times \mathbb{S}^{1}_{t}} R_{\bar{g}} v^{2} d\text{Vol}_{\bar{g}} \right). 
\end{align*}
Here $ \sigma_{\bar{g}} $ is the volume form on the hypersurface $ \partial(M \times \mathbb{S}_{1}^{t}) $. The boundary term is dropped off since we have assumed that $ h_{\bar{g}} \equiv 0 $. The relative Yamabe constant $ \lambda $ is invariant under any conformal transformation of the metric $ \bar{g} $.

Since $ M \times \mathbb{S}_{t}^{1} $ is compact, there exist two positive constants $ D_{3} $ and $ D_{4} $ such that two metrics $ \bar{g} $ and $ \sigma^{*} \bar{g} \oplus d\xi^{2} $ satisfy
\begin{equation*}
   D_{4}^{-2} \sqrt{\det{\bar{g}}} \leqslant \sqrt{\det(\sigma^{*} \bar{g} \oplus d\xi^{2})} \leqslant D_{3}^{\frac{2(n + 1)}{n - 1}} \sqrt{\det{\bar{g}}}.
\end{equation*}
Set $ \xi' = \epsilon^{-1} \xi $, we have
\begin{equation*}
   D_{4}^{-2} \sqrt{\det{(\epsilon^{-2}\bar{g})}} \leqslant \sqrt{\det(\sigma^{*}(\epsilon^{-2} \bar{g}) \oplus d(\xi')^{2})} \leqslant D_{3}^{\frac{2(n + 1)}{n - 1}} \sqrt{\det{(\epsilon^{-2}\bar{g})}}
\end{equation*}
for the same $ D_{3}, D_{4} $. 

Denote the inclusion by $ \sigma_{\xi} : X_{\xi} \times \mathbb{S}^{1} \rightarrow M \times \mathbb{S}^{1} $, and the space $ X_{\xi} \times \mathbb{S}^{1} $ by $ W_{\xi} $. We now apply the three inequalities above for the function $ u $ on $ W $ and $ \bar{u} = \pi^{*}u $ on $ M \times \mathbb{S}_{t}^{1} $ with respect to the metrics $ \sigma^{*}(\epsilon^{-2} \bar{g}) $ and $ \epsilon^{-2} \bar{g} $ respectively. Recall that the function $ \bar{u} $ is constant along each $ \xi $-fiber, it follows that
\begin{align*}
& \lVert u \rVert_{\calL^{\frac{2(n + 1)}{n - 1}}(W, \sigma^{*} (\epsilon^{-2} \bar{g}))} \\
& \qquad = \epsilon^{\frac{n -1}{2(n + 1)}} \left( \int_{0}^{\epsilon^{-1}} \int_{W} \lvert \bar{u} \rvert^{\frac{2(n + 1)}{n - 1}} d\text{Vol}_{\sigma^{*} (\epsilon^{-2} \bar{g})} d\xi'  \right)^{\frac{n - 1}{2(n + 1)}} \\
& \qquad = \epsilon^{\frac{n -1}{2(n + 1)}} \left( \int_{M \times \mathbb{S}_{t'}^{1}} \lvert \bar{u} \rvert^{\frac{2(n + 1)}{n - 1}} d\text{Vol}_{\sigma^{*} \bar{g} \oplus d(\xi')^{2}}  \right)^{\frac{n - 1}{2(n + 1)}} \leqslant \epsilon^{\frac{n -1}{2(n + 1)}} D_{3}  \left( \int_{M \times \mathbb{S}^{1}_{t'}}  \lvert \bar{u} \rvert^{\frac{2(n + 1)}{n - 1}} d\text{Vol}_{\epsilon^{-2}\bar{g}} \right)^{\frac{n - 1}{2(n + 1)}} \\
& \qquad \leqslant \epsilon^{\frac{n -1}{2(n + 1)} } D_{3} \lambda^{-1} \left( \frac{4n}{n - 1} \lVert \nabla_{\epsilon^{-2}\bar{g}} \bar{u} \rVert_{\calL^{2}(M \times \mathbb{S}^{1}_{t'}, \epsilon^{-2}\bar{g})} + \int_{M \times \mathbb{S}^{1}_{t'}} R_{\epsilon^{-2}\bar{g}} \bar{u}^{2} d\text{Vol}_{\epsilon^{-2}\bar{g}} \right)^{\frac{1}{2}} \\
& \qquad \leqslant \epsilon^{\frac{n -1}{2(n + 1)}} D_{3} D_{4} \lambda^{-1} \left( \frac{4n}{n - 1}\int_{0}^{\epsilon^{-1}} \int_{W_{\xi}} \lvert \nabla_{\sigma_{\xi}^{*}(\epsilon^{-2}\bar{g})} \bar{u} \rvert^{2} d\text{Vol}_{\sigma_{\xi}^{*}(\epsilon^{-2}\bar{g})} d\xi' \right)^{\frac{1}{2}} \\
& \qquad \qquad + \epsilon^{\frac{n -1}{2(n + 1)}} D_{3} D_{4} \lambda^{-1} \left( \int_{0}^{\epsilon^{-1}} \int_{W} R_{\epsilon^{-2}\bar{g}} \bar{u}^{2} d\text{Vol}_{\sigma^{*} (\epsilon^{-2}\bar{g})} d\xi' \right)^{\frac{1}{2}} \\
& \qquad \leqslant 2D_{3} D_{4} \lambda^{-1} \epsilon^{-\frac{1}{n + 1}} \left( \frac{4n}{n - 1} \max_{\xi \in [0, 1]} \left( \lVert \nabla_{\sigma_{\xi}^{*} (\epsilon^{-2} \bar{g})}\bar{u} \rVert_{\calL^{2}(W_{\xi}, \sigma_{\xi}^{*}(\epsilon^{-2}\bar{g}))}^{2} \right) + \epsilon^{2} \max_{W} \lvert R_{\bar{g}} \rvert \lVert u \rVert_{\calL^{2}(W, \sigma^{*}(\epsilon^{-2}\bar{g}))}^{2} \right)^{\frac{1}{2}}
\end{align*}
Note that $ \lVert F \rVert_{\calL^{p}(W, \sigma^{*}\bar{g})} $ is small in the sense that $ (C + 1)^{p} \epsilon \ll 1 $, hence $ (C + 1)^{2} \epsilon \ll 1 $ since $ p > n \geqslant 3 $. Note also that there are trivial diffeomorphisms between $ W $ and $ W_{\xi} $ for every $ \xi \in [0, 1] $. Since $ \bar{u} $ is constant for each $ \xi $-fiber, and $ M \times \mathbb{S}^{1} $ is compact, we have $ \lVert \nabla_{\sigma_{\xi}^{*}\bar{g}} \bar{u} \rVert_{\calL^{2}(W_{\xi}, \sigma_{\xi}^{*} \bar{g})} \leqslant D_{5} \eta $ for some $ D_{5} $ independent of $ \bar{u} $ and $ \epsilon $. Without loss of generality, we may assume $ \max_{W} \lvert R_{\bar{g}} \rvert \leqslant D_{5} $ by increasing $ D_{5} $ if necessary. Combining (\ref{Set:eqn4}), (\ref{Set:eqn70a}), (\ref{Set:eqn7a}), (\ref{Set:eqn7}) with the above inequality, we have
\begin{align*}
\left\lVert \frac{\partial^{2}u}{\partial (t')^{2}} \right\rVert_{\calC^{0}(O_{1})} & \leqslant D_{0}\epsilon^{\frac{n}{2}} \lVert \frac{\partial^{2}u}{\partial (t')^{2}} \rVert_{H^{s}(O_{1}, \sigma^{*}(\epsilon^{-2} \bar{g}))} \leqslant D_{0} D_{s} \epsilon^{\frac{n}{2}} \left\lVert \frac{\partial^{2}u}{\partial (t')^{2}} \right\rVert_{\calL^{2}(O_{2}, \sigma^{*}(\epsilon^{-2} \bar{g}))} \\
& \leqslant \epsilon^{2 + \frac{n}{2}} D_{0}D_{2}D_{s}  \lVert F \rVert_{\calL^{2}(W, \sigma^{*}(\epsilon^{-2}g))} + \epsilon^{2 - \frac{n}{n + 1} + \frac{n}{2}} D_{0}D_{1}D_{2}D_{s} \lVert u \rVert_{\calL^{\frac{2(n + 1)}{n - 1}}(W, \sigma^{*} (\epsilon^{-2} \bar{g}))} \\
& \leqslant 2D_{0} D_{1} D_{2} D_{3}D_{4} D_{s} \lambda^{-1}  \epsilon^{1 + \frac{n}{2}} \times \\
& \qquad \times \left( \frac{4n}{n - 1} \max_{\xi \in [0, 1]} \left( \lVert \nabla_{\sigma_{\xi}^{*} (\epsilon^{-2} \bar{g})} \bar{u} \rVert_{\calL^{2}(W_{\xi}, \sigma_{\xi}^{*}(\epsilon^{-2}\bar{g}))}^{2} \right) + \epsilon^{2} \max_{W} \lvert R_{\bar{g}} \rvert \lVert u \rVert_{\calL^{2}(W, \sigma^{*}(\epsilon^{-2}\bar{g}))}^{2} \right)^{\frac{1}{2}} \\
& \qquad \qquad + \epsilon^{2 + \frac{n}{2}} D_{0}D_{2}D_{s}  \lVert F \rVert_{\calL^{2}(W, \sigma^{*}(\epsilon^{-2}\bar{g}))} \\
& : = \bar{D}_{0}  \epsilon^{2} \left( \frac{4n}{n - 1} \max_{\xi \in [0, 1]} \left( \lVert \nabla_{\sigma_{\xi}^{*}  \bar{g}} \bar{u} \rVert_{\calL^{2}(W_{\xi}, \sigma_{\xi}^{*}\bar{g})}^{2} \right) + \max_{W} \lvert R_{\bar{g}} \rvert \lVert u \rVert_{\calL^{2}(W, \sigma^{*}\bar{g})}^{2} \right)^{\frac{1}{2}} \\
& \qquad + \epsilon^{2 + \frac{n}{2}} D_{0}D_{2}D_{s}  \lVert F \rVert_{\calL^{2}(W, \sigma^{*}(\epsilon^{-2}\bar{g}))} \\
& < \frac{8n}{n - 1} \bar{D}_{0} D_{5} \epsilon^{2} \eta + D_{0}D_{2}D_{s} \epsilon^{2} \text{Vol}_{g}(X) (C + 1) \epsilon^{\frac{1}{2}} \\
& : = \bar{D} \epsilon^{2} \eta + \bar{D}' \epsilon^{2 + \frac{1}{2}} (C + 1).
\end{align*}
It follows by the smallness of $ \eta $ in Proposition \ref{Set:prop1} that
\begin{equation*}
\lVert \frac{\partial^{2}u}{\partial t^{2}} \rVert_{\calC^{0}(U_{1})} = \epsilon^{-2} \lVert \frac{\partial^{2}u}{\partial (t')^{2}} \rVert_{\calC^{0}(O_{1})} < \bar{D} \eta + \bar{D}'(C + 1) \epsilon^{\frac{1}{2}} : = \eta' \ll 1.
\end{equation*}
As desired.
\end{proof}
\begin{remark}
    Since $ u_{0} = u + 1 $, Lemma \ref{Set:lemma3} implies that
    \begin{equation*}
        \left\lVert \frac{\partial^{2} u_{0}}{\partial t^{2}} \right\rVert_{\calC^{0}(X \times \lbrace 0 \rbrace_{\xi} \times \lbrace P \rbrace_{t})} < \eta' \Rightarrow \left\lvert \frac{\partial^{2} u'}{\partial t^{2}} \bigg|_{X \times \lbrace 0 \rbrace_{\xi} \times \lbrace P \rbrace_{t}} \right\rvert < \eta'.
    \end{equation*}
\end{remark}

We close this section by comparing $ \Delta_{\bar{g}} u' $, $ \Delta_{\sigma^{*}\bar{g}} u_{0} $ and $ \Delta_{\tau_{1}^{*}\bar{g}} \tilde{u}' $ on $ X \times \lbrace 0 \rbrace_{\xi} \times \lbrace P \rbrace_{t} \subset W \times \lbrace 0 \rbrace_{\xi} \subset M \times \mathbb{S}^{1}_{t} $.
\begin{lemma}\label{Set:lemma4}
Under the hypotheses of Lemma \ref{Set:lemma3},
\begin{equation}\label{Set:eqn8}
\begin{split}
\Delta_{\bar{g}} u' = \Delta_{\tau_{1}^{*}\bar{g}} \tilde{u}' + \frac{\partial^{2} u'}{\partial t^{2}} \; {\rm on} \; X \times \lbrace 0 \rbrace_{\xi} \times \lbrace P \rbrace_{t}, \\
\Delta_{\bar{g}}u' = \Delta_{\sigma^{*}\bar{g}} u_{0} + A_{1}(\bar{g}, M, u)
\end{split}
\end{equation}
where $ A_{1}(\bar{g}, M, u) $ involves zeroth and first order derivatives of $ u $, which does not contain $ \partial_{t} $- terms.
\end{lemma}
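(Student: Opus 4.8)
The plan is to read both lines of (\ref{Set:eqn8}) off from the facts that $\bar g=g\oplus dt^{2}$ is a Riemannian product and that $u'$ is lifted from $W$. Three observations do the work. (i) In a coordinate chart $(x^{1},\dots,x^{n-1},\xi,t)$ of $M\times\mathbb S^{1}_{t}$ with $\{\partial_{i}\}_{i\leqslant n-1}$ tangent to $X\times\{0\}_{\xi}$, the matrix of $\bar g$ is block diagonal, $\det\bar g=\det g$ is independent of $t$, and hence on functions $\Delta_{\bar g}=\Delta^{(M)}_{g}+\partial_{t}^{2}$, where $\Delta^{(M)}_{g}$ is the Laplacian of $g$ differentiating only in the $M$-directions; likewise $\sigma^{*}\bar g=(\imath^{*}g)\oplus dt^{2}$ and $\tau_{1}^{*}\bar g=g$. (ii) By (\ref{Set:eqn6}) the function $u'=\pi^{*}u_{0}$ is constant along every $\xi$-fibre, so $\partial_{\xi}^{l}u'\equiv 0$ for all $l\geqslant 1$, while $\tilde u'=\tau_{1}^{*}u'$ is precisely $u'$ restricted to the slice $M\times\{P\}_{t}$, read as a function on $M$. (iii) We have normalised $h_{g}\equiv 0$, hence $h_{\bar g}\equiv 0$; and since the $\mathbb S^{1}_{t}$-factor is a product direction, the unit $\bar g$-normal of the hypersurface $W=X\times\{0\}_{\xi}\times\mathbb S^{1}_{t}\subset M\times\mathbb S^{1}_{t}$ is $\nu_{g}$ trivially extended, and the mean curvature of $W$ coincides with $h_{g}\equiv 0$.

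The first identity is then immediate: restricting $\Delta_{\bar g}u'=\Delta^{(M)}_{g}u'+\partial_{t}^{2}u'$ to the slice $M\times\{P\}_{t}$ and using $u'(\,\cdot\,,\,\cdot\,,P)=\tilde u'$ on $M$ together with $\tau_{1}^{*}\bar g=g$, one gets $\Delta^{(M)}_{g}u'|_{M\times\{P\}_{t}}=\Delta_{\tau_{1}^{*}\bar g}\tilde u'$. Hence $\Delta_{\bar g}u'=\Delta_{\tau_{1}^{*}\bar g}\tilde u'+\partial_{t}^{2}u'$ holds on all of $M\times\{P\}_{t}$, in particular on $X\times\{0\}_{\xi}\times\{P\}_{t}$.

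For the second identity I expand $\Delta_{\bar g}u'$ and $\Delta_{\sigma^{*}\bar g}u_{0}$ in the chart of (i). Since $\partial_{\xi}u'\equiv 0$, every term of $\Delta_{\bar g}u'$ in which $\partial_{\xi}$ would differentiate $u'$ vanishes, so the surviving $\xi$-derivatives fall only on the coefficients $\sqrt{\det g}$ and $g^{\alpha\beta}$ and are first order in $u'$; moreover $u'|_{W}=u_{0}$ and $\partial_{t}^{2}u'|_{W}=\partial_{t}^{2}u_{0}$, so the two $\partial_{t}^{2}$ contributions cancel on subtracting $\Delta_{\sigma^{*}\bar g}u_{0}=\Delta^{(X)}_{\imath^{*}g}u_{0}+\partial_{t}^{2}u_{0}$. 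Therefore $A_{1}:=\Delta_{\bar g}u'-\Delta_{\sigma^{*}\bar g}u_{0}$, on $W\times\{0\}_{\xi}$, is a differential expression in $u$ involving only $X$-directional derivatives, with coefficients built from $\bar g$ and the embedding of $X\times\{0\}_{\xi}$ in $M$, and in particular containing no $\partial_{t}$-derivative of $u$. Equivalently, and giving its explicit shape, by (iii) the Gauss-type formula for the restriction of a Laplacian to the hypersurface $W$ reads $\Delta_{\bar g}u'|_{W}=\Delta_{\sigma^{*}\bar g}(u'|_{W})+\mathrm{Hess}_{\bar g}u'(\nu_{g},\nu_{g})+(\text{mean-curvature term})=\Delta_{\sigma^{*}\bar g}u_{0}+\mathrm{Hess}_{\bar g}u'(\nu_{g},\nu_{g})$, so $A_{1}=\mathrm{Hess}_{\bar g}u'(\nu_{g},\nu_{g})$; writing $\nu_{g}=a\partial_{\xi}+V$ and using $\partial_{\xi}^{l}u'\equiv 0$ together with the vanishing of every $\bar g$-Christoffel symbol carrying an upper $t$-index, one finds $A_{1}=\nabla_{V}\nabla_{V}u+(\text{terms of order}\leqslant 1\text{ in }u)$, none of the latter involving $\partial_{t}$. (The top-order piece $\nabla_{V}\nabla_{V}u$ is the one eliminated, in \S3, by the defining equation (\ref{Set:eqn3}).)

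In short, Lemma \ref{Set:lemma4} is essentially a bookkeeping statement, and the only point needing care is to check that the product form of $\bar g$ together with $\partial_{\xi}^{l}u'\equiv 0$ indeed strips all $\partial_{t}$- and all spurious $\partial_{\xi}$-derivatives of $u$ from $A_{1}$; and, if one wants the sharp form with leading term $\nabla_{V}\nabla_{V}u$, to identify the surviving second-order coefficient $g^{ij}|_{\xi=0}-(\imath^{*}g)^{ij}$ with $b^{i}b^{j}$ --- the same metric computation that underlies the ellipticity of $L'$ in Lemma \ref{Set:lemma1}. I do not expect any obstacle beyond this.
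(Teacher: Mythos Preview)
Your treatment of the first line of (\ref{Set:eqn8}) is correct and is exactly the local--coordinate verification the paper has in mind: the product splitting $\Delta_{\bar g}=\Delta_{g}^{(M)}+\partial_{t}^{2}$ together with $\tau_{1}^{*}\bar g=g$ gives the identity on the slice $M\times\{P\}_{t}$, hence on $X\times\{0\}_{\xi}\times\{P\}_{t}$. For the second line you also correctly verify the ``no $\partial_{t}$--terms'' clause: the two $\partial_{t}^{2}$ contributions cancel on $W$, and everything left involves only $X$--directional derivatives.

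The gap is in the order of $A_{1}$. The lemma asserts that $A_{1}$ involves \emph{only zeroth and first order} derivatives of $u$, and this is precisely what is used in the proof of Theorem~\ref{POS:thm1} to get $|A_{1}|<1$ from the $\mathcal C^{1,\alpha}$--smallness of $u$. Your own computation, however---either via the coordinate comparison $g^{ab}|_{\xi=0}-(\imath^{*}g)^{ab}=b^{a}b^{b}$ or via the hypersurface formula $\Delta_{\bar g}u'|_{W}=\Delta_{\sigma^{*}\bar g}u_{0}+\mathrm{Hess}_{\bar g}u'(\nu_{g},\nu_{g})$---produces a genuine second--order piece $b^{a}b^{b}\partial_{a}\partial_{b}u$ (the principal part of $\nabla_{V}\nabla_{V}u$) in $A_{1}$. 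So what you have written does not prove the stated lemma; it proves a weaker statement in which $A_{1}$ is second order with top part $\nabla_{V}\nabla_{V}u$.

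Your parenthetical that this top--order piece ``is the one eliminated, in \S3, by the defining equation (\ref{Set:eqn3})'' does not close the gap. In (\ref{POS:eqn3}) the combination used is $4\nabla_{V}\nabla_{V}u_{0}-4\Delta_{\sigma^{*}\bar g}u_{0}$, coming from the Gauss--Codazzi side; an extra $-4A_{1}$ with leading part $-4\nabla_{V}\nabla_{V}u$ would not be absorbed by (\ref{Set:eqn3}) but would remain as an uncontrolled second--order term (it would in fact cancel the $4\nabla_{V}\nabla_{V}u_{0}$ before the PDE can be invoked, leaving only $-4\Delta_{\sigma^{*}\bar g}u_{0}$, which is not what the argument needs). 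Either the ``first order'' clause in the lemma is a misstatement that the downstream argument must be adjusted for, or there is an additional cancellation you have not exhibited; in either case, your proposal as written leaves the key order claim unproved.
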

\begin{proof}
We can check both equations of (\ref{Set:eqn8}) for local coordinates and local frames on charts containing points of $ Q \in X \times \lbrace 0 \rbrace_{\xi} \times \lbrace P \rbrace_{t} $. Note that $ u' = u_{0} \circ \pi $, $ u_{0} = u' \circ \sigma $, and $ \tilde{u}' = u' \circ \tau_{1} $, the rest of the proof follows exactly the same as in \cite[Lemma 2.4]{RX2}
.

\end{proof}
\medskip

\section{Positive Scalar Curvature on $ X $}
In this section, we show that for any metric $ g $ on the compact cylinder $ M $ with $ R_{g} > 0 $ and $ h_{g} \geqslant 0 $ and satisfying the $ g $-angle condition (\ref{intro:eqn1}), there exists a conformal metric $ \tilde{g}' \in [g] $ such that $ R_{\tilde{g}'} > 0 $ and $ R_{\imath^{*} \tilde{g}'} > 0 $ simultaneously. 

We define
\begin{equation*}
    e^{2\phi'} : = (u')^{\frac{4}{n - 2}} \Rightarrow \phi_{0} : = \sigma^{*} \phi', \tilde{\phi}' : = \tau_{1}^{*} \phi' \Rightarrow e^{2\phi_{0}} : = u_{0}^{\frac{4}{n - 2}}, e^{\tilde{\phi}'} = (\tilde{u}')^{\frac{4}{n - 2}}.
\end{equation*}
For any two functions $ \varphi, v $ with the relation $ e^{2\varphi} = v^{\frac{4}{n - 2}}, n \geqslant 3 $ and any Riemannian metric $ g_{0} $, we have
\begin{align*}
    & v^{-\frac{n+2}{n- 2}}\left(-\frac{4}{n - 2} \Delta_{g_{0}} v 
    \right) = e^{-2\varphi} \left(  
    - 2 \Delta_{g_{0}} \varphi - (n - 2) \lvert \nabla_{g_{0}} \varphi \rvert^{2} \right), \\
   &  e^{-2\varphi} \nabla_{g_{0}} \varphi = \frac{2}{n - 2} v^{-\frac{n+2}{n-2}} \nabla_{g_{0}} v,\  e^{-2\varphi} \lvert \nabla_{g_{0}} \varphi \rvert^{2} = \left( \frac{2}{n - 2} \right)^{2}v^{-\frac{2n}{n - 2}} \lvert \nabla_{g_{0}} v \rvert^{2}, \\
   & e^{-2\tilde{\phi}'} \left( 2(n - 2) \nabla_{V} \nabla_{V} \tilde{\phi}' + (n - 2)^{2} \nabla_{V} \tilde{\phi}' \nabla_{V} \tilde{\phi}' \right)   
= 4(\tilde{u}')^{-\frac{n + 2}{n - 2}}   \nabla_{V} \nabla_{V} \tilde{u}'.
\end{align*}  
Our main theorem gives the existence of the PSC metric on $ X \cong X \times \lbrace 0 \rbrace_{\xi} $. We point out that the proof of Theorem \ref{POS:thm1} follows closely the argument of \cite[Theorem 3.1]{RX2}. 
\begin{theorem}\label{POS:thm1}
Let $ X $ be a closed, oriented manifold with $ \dim X \geqslant 2 $, and $ I_{\xi} = [0, 1]_{\xi} $ be the unit interval with $ \xi $-variable. Let $ M = X \times I_{\xi} $ be the compact cylinder. Assume that $ (M, \partial M), \dim M \geqslant 3 $ admits a metic $ g $ with $ R_{g} > 0 $ and $ h_{g} \geqslant 0 $. If $ g $ satisfies the angle condition
    \begin{equation}\label{POS:eqn0}
    \cos \left(\angle_{g}\left(\nu_{g},\partial_{\xi} \right) \right) > \frac{\sqrt{2}}{2} \Leftrightarrow \frac{g(\partial_{\xi}, \partial_{\xi})}{g(\nu_{g}, \partial_{\xi})^{2}} < 2
    \end{equation}
on $ X \times \lbrace 0 \rbrace_{\xi} $, then there exists a metric $ \tilde{g} $ on $ (M, \partial M) $ such that $ \imath^{*} \tilde{g} $ has positive scalar curvature on $ X \times \lbrace 0 \rbrace_{\xi} \cong X $.
\end{theorem}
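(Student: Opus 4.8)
The plan is to take as the desired metric the conformal metric $\tilde g:=(\tilde u')^{\frac{4}{n-2}}g$ on $M$ — equivalently, the pullback $\tau_1^*\!\big((u')^{\frac{4}{n-2}}\bar g\big)$ of the conformally changed metric on $M\times\soo_t$ from the diagram \eqref{diagram} — and to establish $R_{\imath^*\tilde g}>0$ on $X\times\{0\}_\xi$ by propagating the scalar curvature down the diagram with the Gauss--Codazzi equation, exactly as in \cite[Theorem~3.1]{RX2}. First I would apply the conformal transformation of Gauss--Codazzi \eqref{intro:eqn3} on $(M,g)$ with conformal factor $\tilde\phi'$ (so $e^{2\tilde\phi'}=(\tilde u')^{\frac4{n-2}}$), using $h_g\equiv0$ together with the Gauss--Codazzi identity \eqref{intro:eqn2} for $g$ to collapse $R_g-2\,\mathrm{Ric}_g(\nu_g,\nu_g)-|A_g|^2$ into $R_{\imath^*g}$. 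This gives on $X\times\{0\}_\xi$
\begin{equation*}
e^{2\tilde\phi'}R_{\imath^*\tilde g}=R_{\imath^*g}+2(n-2)\big(\nabla_{\nu_g}\nabla_{\nu_g}\tilde\phi'-\Delta_g\tilde\phi'\big)-2(n-2)(\nabla_{\nu_g}\tilde\phi')^2-(n-3)(n-2)|\nabla_g\tilde\phi'|^2 .
\end{equation*}
Here $R_{\imath^*g}$ is bounded below since $M$ is compact, and since $\lVert u\rVert_{\calC^{1,\alpha}(W)}<\eta$ by Proposition~\ref{Set:prop1}, pulling $u_0=u+1$ back through $\pi$ and $\tau_1$ (compactness of $M\times\soo$) makes $\lVert\tilde u'-1\rVert_{\calC^1(M)}$, hence $\lVert\tilde\phi'\rVert_{\calC^1(M)}$, as small as we wish, so the two quadratic terms are $\calC^0$-small.

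Next I would rewrite the second-order term in terms of $\tilde u'$ using the pointwise conformal identities recorded just before the theorem (with $v=\tilde u'$, $\varphi=\tilde\phi'$), which gives
\begin{equation*}
2(n-2)\big(\nabla_{\nu_g}\nabla_{\nu_g}\tilde\phi'-\Delta_g\tilde\phi'\big)=\frac{4}{\tilde u'}\big(\nabla_{\nu_g}\nabla_{\nu_g}\tilde u'-\Delta_g\tilde u'\big)-(n-2)^2(\nabla_{\nu_g}\tilde\phi')^2+\frac{4}{(\tilde u')^2}|\nabla_g\tilde u'|^2 ,
\end{equation*}
again with the last two terms controlled by $\lVert\tilde\phi'\rVert_{\calC^1}$. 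The heart of the argument is to transfer the combination $\nabla_{\nu_g}\nabla_{\nu_g}\tilde u'-\Delta_g\tilde u'$ on $X\times\{0\}_\xi\subset M$ to the quantity $\nabla_V\nabla_V u_0-\Delta_{\sigma^*\bar g}u_0$ on $W$ that the PDE of Proposition~\ref{Set:prop1} actually controls. For this I would use that $\partial_\xi^lu'=0$ together with the splitting $\nu_g=a\partial_\xi+V$ (with $V$ the tangential field of Lemma~\ref{Set:lemma1}) to replace all $\nabla_{\nu_g}$-derivatives of the $\xi$-independent $\tilde u'$ by $\nabla_V$-derivatives up to terms of order $\le1$ in $u$; then combine with Lemma~\ref{Set:lemma4} (which compares $\Delta_{\bar g}u'$, $\Delta_{\tau_1^*\bar g}\tilde u'=\Delta_g\tilde u'$ and $\Delta_{\sigma^*\bar g}u_0$ on $X\times\{0\}_\xi\times\{P\}_t$, the discrepancies being one $\partial_t^2u'$ and a term $A_1$ of order $\le1$ with no $\partial_t$), and absorb the residual $\partial_t^2u'$ via Lemma~\ref{Set:lemma3}, $\lvert\partial_t^2u\rvert<\eta'$ near $X\times\{0\}_\xi\times\{P\}_t$. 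The net effect should be that on $X\times\{0\}_\xi\times\{P\}_t$,
\begin{equation*}
\nabla_{\nu_g}\nabla_{\nu_g}\tilde u'-\Delta_g\tilde u'=\big(\nabla_V\nabla_V u_0-\Delta_{\sigma^*\bar g}u_0\big)+E,\qquad |E|\le K(M,g)\,(1+\eta)+\eta',
\end{equation*}
with $K(M,g)$ depending only on $(M,g)$.

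Finally I would feed in the PDE. Since $u_0=u+1$ and $L'(1)=0$, $u_0$ solves $Lu_0=F+R_{\bar g}|_W$, i.e. $4\big(\nabla_V\nabla_V u_0-\Delta_{\sigma^*\bar g}u_0\big)=F-R_{\bar g}|_W\,u$ on $W$; since $P\cong\{0\}_t$ lies inside $X\times(-\tfrac{\epsilon}{2},\tfrac{\epsilon}{2})_t$ where $F\equiv C+1$ by Lemma~\ref{Set:lemma2}, at those points $\nabla_V\nabla_V u_0-\Delta_{\sigma^*\bar g}u_0=\tfrac14(C+1)-\tfrac14R_{\bar g}|_W\,u\ge\tfrac14(C+1)-\tfrac14\lVert R_{\bar g}\rVert_{\calC^0}\,\eta$. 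Assembling all of this, $e^{2\tilde\phi'}R_{\imath^*\tilde g}$ on $X\times\{0\}_\xi$ is bounded below by $R_{\imath^*g}+\tfrac{4}{\tilde u'}\cdot\tfrac14(C+1)$ minus quantities of the form $K'(M,g)$ and $K'(M,g)(\eta+\eta')$, where $\tilde u'\le1+\eta$ and all the $K'$ are independent of $C$. I would therefore fix the constants in this order: first choose $C=C(M,g)$ large enough that this gain beats all the $(M,g)$-controlled losses with room to spare; then, via Proposition~\ref{Set:prop1} and Lemma~\ref{Set:lemma3}, take $\eta$ (and with it $\delta$, $\epsilon$, the function $F$, and $\eta'$) small enough that the remaining $O(\eta)+O(\eta')$ terms are negligible. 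This makes $e^{2\tilde\phi'}R_{\imath^*\tilde g}>0$, hence $R_{\imath^*\tilde g}>0$, everywhere on $X\times\{0\}_\xi\cong X$, which is the assertion. The step I expect to be the real obstacle is the transfer in the third paragraph: one must verify, using Lemmas~\ref{Set:lemma3}--\ref{Set:lemma4} and $\partial_\xi u'=0$, that the largeness produced by the elliptic PDE on $W$ (ellipticity being precisely where the angle condition \eqref{POS:eqn0} is used, through Lemma~\ref{Set:lemma1}) surfaces with the correct sign in the $X\times\{0\}_\xi$–expression for $R_{\imath^*\tilde g}$, and that no second-order error terms appear beyond the single $\partial_t^2u$ already tamed in Lemma~\ref{Set:lemma3}; this is where the product structure of $\bar g$, the $\xi$-independence of $u'$, and the angle condition all get used at once, following \cite[Theorem~3.1]{RX2}.
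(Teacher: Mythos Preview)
Your proposal is correct and follows essentially the same route as the paper: define $\tilde g=(\tilde u')^{\frac{4}{n-2}}g$, apply the conformal Gauss--Codazzi formula on $X\times\{0\}_\xi$, replace $\nabla_{\nu_g}\nabla_{\nu_g}$ by $\nabla_V\nabla_V$ via $\partial_\xi u'=0$, use Lemmas~\ref{Set:lemma3}--\ref{Set:lemma4} to pass from $\Delta_g\tilde u'$ to $\Delta_{\sigma^*\bar g}u_0$, and then invoke the PDE \eqref{Set:eqn3} so that $F=C+1$ dominates. The only cosmetic differences are that the paper keeps the curvature terms $-2\,{\rm Ric}_g(\nu_g,\nu_g)+h_g^2-|A_g|^2$ separate rather than collapsing them into $R_{\imath^*g}$, and that your error bound $|E|\le K(M,g)(1+\eta)+\eta'$ is slightly looser than needed (the lower-order pieces $A_1$ and the $\nu_g$-vs-$V$ discrepancy are actually $O(\eta)$, not $O(1)$), but this does not affect the argument.
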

\begin{proof}
As we discussed before, we may assume that $ R_{g} > 0 $ and $ h_{g} \equiv 0 $ up to a possible conformal transformation. Recall that the $ g $-angle condition is invariant under conformal transformation, hence is unchanged.

We set $ p > n $ as in Proposition \ref{Set:prop1}. Pick $ C > 0 $ such that
\begin{equation}\label{POS:eqn1}
C > 2 \max_{\partial M} \left( \lvert R_{g} \rvert + 2 \lvert {\rm Ric}_{g}(\nu_{g}, \nu_{g}) \rvert + h_{g}^{2} + \lvert A_{g} \rvert^{2} \right)  + 3.
\end{equation}
We choose $ \eta, \eta' \ll 1 $, and associated small enough $ \delta $, $ \epsilon $ and associated $ F $ in Lemma \ref{Set:lemma2}, such that (i) the solution of (\ref{Set:eqn3}) satisfies  (\ref{Set:eqn4}) and (\ref{Set:eqn5}); (ii) $ \lvert A_{1}(\bar{g}, M, u) \rvert < 1 $ on $ X \times \lbrace 0 \rbrace_{\xi} \times \lbrace P \rbrace_{t} $. Note that $ \tau_{1}^{*}\bar{g} = g $, $ \nabla_{\tau_{1}^{*} \bar{g}} $ and $ \nabla_{V} $ do not contain $ \xi $-derivative. It follows that
\begin{equation}\label{POS:eqn1a}
   \frac{1}{2} < \lVert u_{0} \rVert_{\calC^{0}(X \times [-\frac{\epsilon}{2}, \frac{\epsilon}{2}]_{t})} < \frac{3}{2} \Rightarrow \left\lvert \frac{4}{n - 2} \frac{\left\lvert \nabla_{\tau_{1}^{*} \bar{g}} u_{0} \right\rvert^{2}}{u_{0}} + \frac{4n}{n - 2} \frac{\nabla_{V} u_{0} \nabla_{V} u_{0}}{u_{0}} \right\rvert < 1 \; {\rm on} \; X \cong X \times \lbrace 0 \rbrace_{\xi}.
\end{equation}
For the metric $ \tilde{g} = e^{2\phi'} \bar{g} = (u')^{\frac{4}{n - 2}} \bar{g} $ on $ M \times \mathbb{S}^{1}_{t} $, the normal vector field on $ X \times \lbrace 0 \rbrace_{\xi} \times \lbrace P \rbrace_{t} $ becomes $ e^{-\tilde{\phi}'}\nu_{g} $. Applying Gauss-Codazzi equation on $ X \cong X \times \lbrace 0 \rbrace_{\xi} \times \lbrace P \rbrace_{t} $ with respect to the metric $ \tau_{1}^{*}\tilde{g} $ on $ M $,
\begin{equation*}
R_{\imath^{*} \tau_{1}^{*} \tilde{g}}
 = R_{\tau_{1}^{*} \tilde{g}} - 2{\rm Ric}_{\tau_{1}^{*} \tilde{g}}\left(e^{-\tilde{\phi}'}\nu_{g}, e^{-\tilde{\phi}'}\nu_{g} \right) + h_{\tau_{1}^{*} \tilde{g}} - \lvert A_{\tau_{1}^{*} \tilde{g}} \rvert^{2}.
\end{equation*}
The conformal transformation of Ricci and scalar curvatures are given by
\begin{align*}
    {\rm Ric}_{\tau_{1}^{*} \tilde{g}}\left(e^{-\tilde{\phi}'}\nu_{g}, e^{-\tilde{\phi}'}\nu_{g}\right) & = e^{-2\tilde{\phi}'}\left({\rm Ric}_{\tau_{1}^{*}\bar{g}}\left(\nu_{g}, \nu_{g} \right) - (n - 2)(\nabla_{\nu_{g}} \nabla_{\nu_{g}} \tilde{\phi}' - \nabla_{\nu_{g}} \tilde{\phi}' \nabla_{\nu_{g}} \tilde{\phi}')\right) \\
    & \qquad - e^{-2\tilde{\phi}'}\left(\Delta_{\tau_{1}^{*}\bar{g}} \tilde{\phi}' + (n - 2) \left\lvert \nabla_{\tau_{1}^{*} \bar{g}} \tilde{\phi}' \right\rvert^{2}\right)\bar{g}_{\xi\xi},\\
  R_{\tau_{1}^{*} \tilde{g}}  &=  e^{-2\tilde{\phi}'} \left(R_{\tau_{1}^{*}\bar{g}} -2(n-1)\Delta_{\tau_{1}^{*}\bar{g}}\tilde{\phi}'
  -(n-2)(n-1)\left\lvert\nabla_{\tau_{1}^{*}\bar{g}}\tilde{\phi}'\right\rvert^2 \right).
  \end{align*}
The conformal transformation of the second fundamental form and mean curvature are given by
\begin{align*}
   \lvert A_{\tau_{1}^{*} \tilde{g}} \rvert^{2} & = e^{-2\tilde{\phi}'} \left( \lvert A_{\tau_{1}^{*} \bar{g}} \rvert^{2} + 
   2 n h_{\tau_{1}^{*}\bar{g}} \frac{\partial \tilde{\phi}'}{\partial \nu_{g}} +  n^2
   \left(\frac{\partial \tilde{\phi}'}{\partial \nu_{g}} \right)^{2}  \right),\\
    h_{\tau_{1}^{*} \tilde{g}}^{2} & = e^{-2\tilde{\phi}'} \left( h_{\tau_{1}^{*} \bar{g}}^{2} +
     2n h_{\tau_{1}^{*}\bar{g}} \frac{\partial \tilde{\phi}}{\partial \nu_{g}}  + 
       n^2\left(\frac{\partial \tilde{\phi}'}{\partial \nu_{g}} \right)^{2} \right).
\end{align*}
By (\ref{Set:eqn6}), $ \nabla_{\nu_{g}} \nabla_{\nu_{g}} \tilde{\phi}' = \nabla_{V} \nabla_{V} \tilde{\phi}' $. According to the laws of conformal transformations listed above, and the differential relations before Theorem \ref{POS:thm1}, the Gauss-Codazzi equation converts to
\begin{equation}\label{POS:eqn2}
    \begin{split}
    R_{\imath^{*} \tau_{1}^{*} \tilde{g}} & = e^{-2\tilde{\phi}'} \left(R_{\tau_{1}^{*}\bar{g}} - 2{\rm Ric}_{\tau_{1}^{*}\bar{g}}\left(\nu_{g}, \nu_{g} \right) + h_{\tau_{1}^{*} \bar{g}}^{2} - \lvert A_{\tau_{1}^{*} \bar{g}} \rvert^{2} \right) \\
    & \qquad + e^{-2\tilde{\phi}'} \left(- 2(n - 2) \Delta_{\tau_{1}^{*}\bar{g}} \tilde{\phi}' - (n - 3)(n - 2) \left\lvert \nabla_{\tau_{1}^{*}\bar{g}} \tilde{\phi}' \right\rvert \right) \\
    & \qquad \qquad + 2(n - 2)e^{-2\tilde{\phi}'}\left(\nabla_{V} \nabla_{V} \tilde{\phi}' -  \nabla_{V} \tilde{\phi}' \nabla_{V} \tilde{\phi}' \right) \\
    & = (\tilde{u}')^{-\frac{n +2}{n - 2}}\left(R_{\tau_{1}^{*}\bar{g}} \tilde{u}' - 2{\rm Ric}_{\tau_{1}^{*}\bar{g}}\left(\nu_{g}, \nu_{g} \right)\tilde{u}' + h_{\tau_{1}^{*} \bar{g}}^{2}\tilde{u}' - \lvert A_{\tau_{1}^{*} \bar{g}} \rvert^{2}\tilde{u}' \right) \\
    & \qquad + (\tilde{u}')^{-\frac{n +2}{n - 2}}\left( 4\nabla_{V} \nabla_{V} \tilde{u}' - 4\Delta_{\tau_{1}^{*}\bar{g}} \tilde{u}' \right) \\
    & \qquad \qquad + (\tilde{u}')^{-\frac{n +2}{n - 2}} \left( \frac{4}{n - 2} \frac{\left\lvert \nabla_{\tau_{1}^{*} \bar{g}} \tilde{u}' \right\rvert^{2}}{\tilde{u}'} - \frac{4n}{n - 2} \frac{\nabla_{V} \tilde{u}' \nabla_{V} \tilde{u}'}{\tilde{u}'} \right).
    \end{split}
\end{equation}
Note that on $ X \cong X \times \lbrace 0 \rbrace_{\xi} \times \lbrace P \rbrace_{t} \subset W \times \lbrace 0 \rbrace_{\xi} $, $ \tilde{u}' = u_{0} $. Recall that $ u_{0} = u + 1 $, which satisfies the following PDE
\begin{equation*}
    4\nabla_{V} \nabla_{V} u_{0} - 4\Delta_{\sigma^{*} \bar{g}} u_{0} + R_{\bar{g}} |_{W} u_{0} = F + R_{\bar{g}} |_{W} \; {\rm on} \; X.
\end{equation*}
Since $ \bar{g} = g \oplus dt^{2} $, it follows that $ R_{\bar{g}} = R_{\tau_{1}^{*}\bar{g}} = R_{g} > 0 $ on $X \times \lbrace 0 \rbrace_{\xi} $ by hypothesis. Applying (\ref{Set:eqn5}) of Lemma \ref{Set:lemma3}, (\ref{Set:eqn8}) of Lemma \ref{Set:lemma4}, Remark 2.3, and (\ref{POS:eqn1a}), the formula (\ref{POS:eqn2}) on $ X \cong X \times \lbrace 0 \rbrace_{\xi} \times \lbrace P \rbrace_{t} $ becomes
\begin{equation}\label{POS:eqn3}
\begin{split}
 R_{\imath^{*} \tau_{1}^{*} \tilde{g}} & =u_{0}^{-\frac{n +2}{n - 2}}\left(  - 2{\rm Ric}_{\tau_{1}^{*}\bar{g}}\left(\nu_{g}, \nu_{g} \right)u_{0} + h_{\tau_{1}^{*} \bar{g}}^{2} u_{0} - \lvert A_{\tau_{1}^{*} \bar{g}} \rvert^{2} u_{0} \right) \\
    & \qquad + u_{0}^{-\frac{n +2}{n - 2}}\left( 4\nabla_{V} \nabla_{V} u_{0} - 4\Delta_{\tau_{1}^{*}\bar{g}} \tilde{u}' - 4\frac{\partial^{2} \tilde{u}'}{\partial t^{2}} + 4\frac{\partial^{2} \tilde{u}'}{\partial t^{2}} + R_{\tau_{1}^{*} \bar{g}} u_{0} \right) \\
    & \qquad \qquad + u_{0}^{-\frac{n +2}{n - 2}} \left( \frac{4}{n - 2} \frac{\left\lvert \nabla_{\tau_{1}^{*} \bar{g}} u_{0} \right\rvert^{2}}{u_{0}} - \frac{4n}{n - 2} \frac{\nabla_{V} u_{0} \nabla_{V} u_{0}}{u_{0}} \right) \\
    & \geqslant u_{0}^{-\frac{n +2}{n - 2}}\left(- 2{\rm Ric}_{\tau_{1}^{*}\bar{g}}\left(\nu_{g}, \nu_{g} \right)u_{0} + h_{\tau_{1}^{*} \bar{g}}^{2} u_{0} - \lvert A_{\tau_{1}^{*} \bar{g}} \rvert^{2} u_{0} \right) \\
    & \qquad + u_{0}^{-\frac{n +2}{n - 2}}\left( 4\nabla_{V} \nabla_{V} u_{0} - 4\Delta_{\bar{g}} u' - 4\eta' + R_{\bar{g}} |_{W} u_{0}\right) + u_{0}^{-\frac{n +2}{n - 2}} \cdot (-1) \\
    & = u_{0}^{-\frac{n +2}{n - 2}}\left( - 2{\rm Ric}_{\tau_{1}^{*}\bar{g}}\left(\nu_{g}, \nu_{g} \right)u_{0} + h_{\tau_{1}^{*} \bar{g}}^{2} u_{0} - \lvert A_{\tau_{1}^{*} \bar{g}} \rvert^{2} u_{0} \right) \\
    & \qquad + u_{0}^{-\frac{n +2}{n - 2}}\left( 4\nabla_{V} \nabla_{V} u_{0} - 4\Delta_{\sigma^{*}\bar{g}} u_{0}  + R_{\bar{g}} |_{W} u_{0}\right) + u_{0}^{-\frac{n +2}{n - 2}} \cdot (-1 - 4 \eta' + A_{1}(\bar{g}, M, u)) \\
    & = u_{0}^{-\frac{n +2}{n - 2}}\left(- 2{\rm Ric}_{\tau_{1}^{*}\bar{g}}\left(\nu_{g}, \nu_{g} \right)u_{0} + h_{\tau_{1}^{*} \bar{g}}^{2} u_{0} - \lvert A_{\tau_{1}^{*} \bar{g}} \rvert^{2} u_{0} \right) \\
    & \qquad u_{0}^{-\frac{n +2}{n - 2}} \left(F + R_{\bar{g}} |_{W} -1 - 4 \eta' + A_{1}(\bar{g}, M, u) \right).
\end{split}
\end{equation}
By Lemma \ref{Set:lemma2}, $ F = C + 1 $ on $ \partial M $. By (\ref{POS:eqn1}), (\ref{POS:eqn3}) implies that
\begin{equation*}
    R_{\imath^{*} \tau_{1}^{*} \tilde{g}} \geqslant  u_{0}^{-\frac{n +2}{n - 2}} \left( C + 1 + R_{\bar{g}} - 2 \max_{\partial M} \left(2 \lvert {\rm Ric}_{g}(\nu_{g}, \nu_{g}) \rvert + h_{g}^{2} + \lvert A_{g} \rvert^{2} \right) - 3 \right) > 0. 
\end{equation*}
Therefore, the metric
\begin{equation*}
 \tau_{1}^{*} \tilde{g} = \tau_{1}^{*}(u')^{\frac{4}{n- 2}} \bar{g} = (\tilde{u}')^{\frac{4}{n- 2}} g : = g', g' \in [g]
\end{equation*}
induces a PSC metric $ \imath^{*} g' \in [\imath^{*} g] $ on $ X \cong X \times \lbrace 0 \rbrace_{\xi} $.
\end{proof}
\begin{remark}\label{POS:re1}
    Set $ X_{\xi} = X \times \lbrace \xi \rbrace, \xi \in [0, 1] $, we can define the second fundamental form, unit normal vector field, etc. on the hypersurface $ X_{\xi} $ also. We can then apply our $ g $-angle condition on $ X_{\xi} $. It follows that if $ M $ admits a metric $ g $ with $ R_{g} > 0 $ and $ h_{g} \geqslant 0 $, and the $ g $-angle condition on $ X_{\xi} $ holds, then there exists a metric $ \tilde{g} $ such that $ \imath_{\xi}^{*} \tilde{g} $ has positive scalar curvature on $ X_{\xi} $. Here $ \imath_{\xi} : X_{\xi} \rightarrow M $ is the natural inclusion.
\end{remark}

With Remark \ref{POS:re1}, we generalize the scalar and mean curvature comparison result of Gromov and Lawson \cite{GL} for $ X = \mathbb{T}^{n}, n \geqslant 1 $, and the result of R\"ade for $ \dim X \leqslant 7 $, $ \dim X \neq 4 $.
\begin{corollary}\label{POS:cor2}
     Let $ X $ be a closed, oriented manifold with $ \dim X \geqslant 2 $. If $ X $ admits no PSC metric, meanwhile $ X \times I $ admits a PSC metric $ g $. If the $ g $-angle condition holds on some hypersurface $ X_{\xi}, \xi \in [0, 1] $, then the mean curvature $ h_{g} $ must be negative somewhere.
\end{corollary}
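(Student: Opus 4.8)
The plan is to obtain Corollary~\ref{POS:cor2} as the contrapositive of Theorem~\ref{POS:thm1}, in the form extended to interior slices by Remark~\ref{POS:re1}; no new analysis is required. I would argue by contradiction. Assume $h_g \geqslant 0$ everywhere on $\partial M = X \times \lbrace 0 \rbrace \cup X \times \lbrace 1 \rbrace$. Since $g$ is a PSC metric we have $R_g > 0$ on $M = X \times I$, and by hypothesis the $g$-angle condition $\angle_g(\nu_{g,\xi},\partial_\xi) < \frac{\pi}{4}$ holds on some hypersurface $X_\xi = X \times \lbrace \xi \rbrace$, $\xi \in [0,1]$, with $\nu_{g,\xi}$ oriented so that the angle lies in $[0,\frac{\pi}{2}]$. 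These are exactly the hypotheses under which Remark~\ref{POS:re1} produces a metric $\tilde g$ on $(M,\partial M)$ with $\imath_\xi^*\tilde g$ of positive scalar curvature on $X_\xi \cong X$, where $\imath_\xi : X_\xi \to M$ is the inclusion. Then $X$ carries a PSC metric, contradicting the standing assumption. Hence $h_g$ must be negative somewhere on $\partial M$, which is the assertion of the corollary.

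The one point deserving verification is that Remark~\ref{POS:re1} --- that is, Theorem~\ref{POS:thm1} with the distinguished slice $X \times \lbrace 0 \rbrace_\xi$ replaced by an arbitrary $X_\xi$ --- really does follow by the same argument. I would check this by rerunning \S2 and \S3 verbatim, building the diagram~(\ref{diagram}) over $W_\xi = X_\xi \times \mathbb{S}^1_t$ with inclusion $\sigma_\xi$, and defining the tangential field $V$ from the decomposition $\nu_{g,\xi} = a\partial_\xi + \sum_i b^i\partial_i$ along $X_\xi$. The $g$-angle condition on $X_\xi$ is precisely what makes $L' = \nabla_V\nabla_V - \Delta_{\sigma_\xi^*\bar g}$ elliptic (Lemma~\ref{Set:lemma1}), while Lemmas~\ref{Set:lemma2}--\ref{Set:lemma4} and Proposition~\ref{Set:prop1} use only that $W_\xi$ is a closed oriented manifold and that $\bar g = g \oplus dt^2$ is a product, so they carry over unchanged; the constant $C$ in~(\ref{POS:eqn1}) is then taken to dominate $\lvert R_g\rvert + 2\lvert {\rm Ric}_g(\nu_{g,\xi},\nu_{g,\xi})\rvert + h_{g,\xi}^2 + \lvert A_{g,\xi}\rvert^2$ over the compact $X_\xi$, and the Gauss--Codazzi computation in the proof of Theorem~\ref{POS:thm1} runs with $\imath$ replaced by $\imath_\xi$. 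The preliminary reduction to $h_g \equiv 0$ via~\cite{ESC} stays available because we are assuming $h_g \geqslant 0$ together with $R_g > 0$.

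I do not expect a genuine obstacle here: all the work sits in Theorem~\ref{POS:thm1}, and the corollary is a one-line deduction once that theorem is in hand. The only mild subtlety is bookkeeping --- making sure the interior-slice version of the construction is legitimate, in particular that $X_\xi$ being a two-sided hypersurface in the interior of $M$ (rather than a boundary component) causes no trouble, which it does not, since the mean-curvature hypothesis is always imposed on $\partial M$ and never on $X_\xi$ itself. As stated, the corollary recovers the Gromov--Lawson result for $X = \mathbb{T}^n$~\cite{GL} and Räde's result~\cite{Rade} without its dimension restriction, whenever the angle condition is in force on at least one slice.
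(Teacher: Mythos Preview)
Your proposal is correct and matches the paper's approach exactly: the paper states Corollary~\ref{POS:cor2} without proof, treating it as the immediate contrapositive of Theorem~\ref{POS:thm1} in the interior-slice form of Remark~\ref{POS:re1}, just as you argue. Your additional paragraph verifying that the construction carries over to an arbitrary slice $X_\xi$ is a welcome sanity check but introduces nothing the paper does not already claim in Remark~\ref{POS:re1}.
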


We now give another geometric consequence of Theorem \ref{POS:thm1}. Theorem \ref{POS:thm1} says that for PSC metric $ g $ on $ M $, the Yamabe constant for the conformal class $ [\imath^{*} g] $ on $ X \cong X \times \lbrace 0 \rbrace_{\xi} $ is positive. The next result shows that there exits a metric in the conformal class $ [g] $ which admits PSC metric on $ M $ and induced PSC metric on $ X $.
\begin{corollary}\label{POS:cor1}
Let $(M, \partial M, g) $ be the same as in Theorem \ref{POS:thm1}, $ \dim M = n $. 
If
\begin{equation*}
\cos \left(\angle_g(\partial_{\xi}, \nu_{g}) \right) > \frac{\sqrt{2}}{2}
\end{equation*}
on $ X \times \lbrace 0 \rbrace_{\xi} $, then there exists a conformal metric $ \tilde{g} \in [g] $ such that $ R_{\tilde{g}} > 0 $ and $ R_{\imath^{*} \tilde{g}} > 0 $.
\end{corollary}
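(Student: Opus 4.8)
The plan is to combine Theorem~\ref{POS:thm1} with the solvability of a Yamabe-type problem with Dirichlet boundary condition from \cite{XU7}. Theorem~\ref{POS:thm1} already produces a metric in $[g]$ whose restriction to $X \times \lbrace 0 \rbrace_{\xi}$ is PSC, but, as explained in the introduction, the conformal factor $(\tilde{u}')^{\frac{4}{n-2}}$ produced there is only controlled in $\calC^{1,\alpha}(M)$ (its Laplacian on $M$ is not estimated), so that metric need not have positive scalar curvature on all of $M$. Hence a second conformal change is needed, and it must be carried out so as not to spoil the scalar curvature on $X \times \lbrace 0 \rbrace_{\xi}$.

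First I would invoke Theorem~\ref{POS:thm1}: since $g$ has $R_{g} > 0$, $h_{g} \geqslant 0$ and satisfies the angle condition on $X \times \lbrace 0 \rbrace_{\xi}$, there is a metric $g_{1} = (\tilde{u}')^{\frac{4}{n-2}} g \in [g]$ with $R_{\imath^{*} g_{1}} > 0$ on $X \cong X \times \lbrace 0 \rbrace_{\xi}$; equivalently the Yamabe invariant of $[\imath^{*} g_{1}] = [\imath^{*} g]$ on the closed manifold $X$ is positive. Next I would record that the sign of the relevant conformal invariant of $(M, \partial M)$ is already fixed by $g$ itself: since $R_{g} > 0$, the Dirichlet quadratic form $v \mapsto \int_{M} \big( \tfrac{4(n-1)}{n-2}\lvert \nabla_{g} v \rvert^{2} + R_{g}\, v^{2} \big)\, dV_{g}$ is positive definite on $H^{1}_{0}(M)$, and this positivity is conformally invariant: under $g \mapsto e^{2\phi} g$ the conformal Laplacian satisfies $L_{e^{2\phi}g} u = e^{-\frac{n+2}{2}\phi} L_{g}\big( e^{\frac{n-2}{2}\phi} u \big)$, so the Dirichlet form in the metric $e^{2\phi} g$ is carried bijectively onto that in $g$ by $u \mapsto e^{\frac{n-2}{2}\phi} u$ on $H^{1}_{0}(M)$. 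Thus the relative (Dirichlet) Yamabe invariant of $(M, \partial M, [g_{1}])$ is positive as well; the boundary mean curvature term is harmless, since one may assume $h_{g} \equiv 0$ after an initial conformal change, exactly as in the proof of Theorem~\ref{POS:thm1}.

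Now I would apply the Yamabe-type problem with Dirichlet boundary condition of \cite{XU7} to the background metric $g_{1}$, prescribing the boundary value of the conformal factor to be $\equiv 1$ on $\partial M$ (any positive smooth choice realizing a PSC metric on $X \times \lbrace 0 \rbrace_{\xi}$ would do, and $\equiv 1$ is the simplest). This produces a positive smooth $w$ on $M$ with $w|_{\partial M} \equiv 1$ solving
\begin{equation*}
-\frac{4(n-1)}{n-2}\Delta_{g_{1}} w + R_{g_{1}} w = \lambda\, w^{\frac{n+2}{n-2}} \quad \text{in } M, \qquad \lambda > 0,
\end{equation*}
the sign of the multiplier $\lambda$ being that of the positive invariant. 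Setting $\tilde{g} := w^{\frac{4}{n-2}} g_{1} \in [g_{1}] = [g]$ then gives $R_{\tilde{g}} = \lambda > 0$ on $M$, while $\imath^{*}\tilde{g} = \big( w|_{X \times \lbrace 0 \rbrace_{\xi}} \big)^{\frac{4}{n-2}} \imath^{*} g_{1} = \imath^{*} g_{1}$, so that $R_{\imath^{*}\tilde{g}} = R_{\imath^{*} g_{1}} > 0$, which is exactly the assertion.

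The main obstacle is entirely contained in \cite{XU7}: one must know that the Dirichlet Yamabe problem on $(M, \partial M, g_{1})$ with the prescribed positive boundary data admits a positive (smooth) solution whose multiplier $\lambda$ inherits the sign of the conformal invariant — which, as usual, requires ruling out interior/boundary concentration for the associated constrained minimization, or treating it via a subcritical approximation. Granting that result, the work on our side reduces to the short conformal-invariance computation for the Dirichlet form sketched above together with the trivial bookkeeping that prescribing $w \equiv 1$ on $X \times \lbrace 0 \rbrace_{\xi}$ leaves $\imath^{*} g_{1}$, hence its positive scalar curvature, unchanged; accordingly I expect no further difficulty, the genuinely new content being Theorem~\ref{POS:thm1} and Corollary~\ref{POS:cor1} being, in the words of the introduction, a direct application of \cite{XU7}.
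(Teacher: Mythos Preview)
Your approach is correct and reaches the same conclusion, but the route through \cite{XU7} differs from the paper's. The paper works directly with the original metric $g$: it records that the first Neumann eigenvalue $\eta_{1,g}$ of the conformal Laplacian on $M$ is positive (from $R_{g}>0$, $h_{g}\geqslant 0$), applies \cite[Theorem~1.1]{XU7} to obtain $g'\in[g]$ with $R_{g'}$ a positive constant \emph{and} $R_{\imath^{*}g'}$ also a constant, and then determines the sign of that boundary constant a~posteriori via the conformal invariance of the sign of the first eigenvalue of the conformal Laplacian on the closed manifold $X$ (invoking \cite{KW}, \cite{PL}), which Theorem~\ref{POS:thm1} guarantees to be positive. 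You instead first pass to the metric $g_{1}$ produced by Theorem~\ref{POS:thm1}, then solve the Dirichlet Yamabe problem for $g_{1}$ with boundary datum $w\equiv 1$, so that the induced boundary metric is literally unchanged and its positive scalar curvature is inherited directly. Your path sidesteps the separate eigenvalue argument on $X$, at the cost of having to check (as you do) that the positivity of the relevant Dirichlet invariant survives the passage from $g$ to $g_{1}$; the paper's path keeps the background metric $g$ throughout but must argue separately for the sign of the constant $R_{\imath^{*}g'}$. Both hinge on the same black box from \cite{XU7}, so neither is materially simpler.
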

\begin{proof}
Let $ \eta_{1, g} $ be the first eigenvalue of the conformal Laplacian $ -\frac{4(n - 1)}{n- 2} \Delta_{g} + R_{g} $ on $ M $ with Neumann boundary condition. It is well-known that $ R_{g} > 0 $ and $ h_{g} \geqslant 0 $ implies that $ \eta_{1, g} > 0 $.

Let $ \zeta_{1, \imath^{*}g} $ be the first eigenvalue of the conformal Laplacian $ -\frac{4(n - 2)}{n - 3} \Delta_{\imath^{*}g} + R_{\imath^{*}g} $. By Theorem \ref{POS:thm1}, $ R_{\imath^{*} g_{1}} > 0 $ for some $ g_{1} \in [g] $. It follows that $ \zeta_{1, \imath^{*}g_{1}} > 0 $, see e.g. \cite{PL}. By \cite[Theorem 3.2]{KW}, $ \zeta_{1, \imath^{*}g} > 0 $ as the sign is preserved under conformal change.

Since $ \eta_{1, g} > 0 $, \cite[Theorem 1.1]{XU7} implies that there exists a metric $ g' \in [g] $ such that $ R_{\tilde{g}} $ is a positive constant, and $ R_{\imath^{*} g'} $ is also a constant.  Therefore $ R_{\imath^{*} g'} > 0 $ since otherwise it contradicts the positivity of $ \zeta_{1, \imath^{*} g'} $.
\end{proof}
\medskip

\bibliographystyle{plain}
\bibliography{ScalarPre}
\end{document}